\newtheorem{thm}{Theorem}[section]
\newtheorem{theorem}[thm]{Theorem}
\newtheorem{corollary}[thm]{Corollary}
\newtheorem{proposition}[thm]{Proposition}
\newtheorem{lemma}[thm]{Lemma}
\newtheorem*{theorem*}{Theorem}
\newtheorem*{corollary*}{Corollary}
\theoremstyle{definition}
\newtheorem*{defn*}{Definiton}
\newtheorem{example}[thm]{Example}
\newtheorem{remark}[thm]{Remark}
\newcommand{\N}{\mathbb{N}} 
\newcommand{\Z}{\mathbb{Z}} 
\newcommand{\Q}{\mathbb{Q}} 
\newcommand{\R}{\mathbb{R}}
\newcommand{\G}{\Gamma}
\DeclareMathOperator{\Ima}{Im}
\DeclareMathOperator{\dom}{dom}
\DeclareMathOperator{\co}{Comm}
\newcommand{\La}{\Lambda}
\newcommand{\Id}{\mathrm{Id}}
\newcommand{\Sub}{\operatorname{Sub}}
\newcommand{\act}{\!\curvearrowright\!}
\newcommand{\Fix}{\operatorname{Fix}}
\newcommand{\Prob}{\operatorname{Prob}}
\newcommand{\cC}{\mathcal{C}}
\newcommand{\psl}{\mathrm{PSL}(2,\Z)}
\newcommand{\F}{\mathbb{F}}
\title{$C^*$-irreducibility of commensurated subgroups}
\author{Kang Li and Eduardo Scarparo}
\address{Kang Li\\Friedrich-Alexander-Universität Erlangen-Nürnberg \\Germany}
\email{kang.li@fau.de}
\address{Eduardo Scarparo\\ Center for Engineering\\ Federal University of Pelotas\\ Brazil}
\email{eduardo.scarparo@ufpel.edu.br}
\thanks{This project has received funding from the European Research Council (ERC) under the European Union's Horizon 2020 research and innovation programme (grant agreement No. 817597).}
\begin{document}

\begin{abstract}
 Given a commensurated subgroup $\La$ of a group $\G$,  we completely characterize when the inclusion $\La\leq \G$ is $C^*$-irreducible and provide new examples of such inclusions.  In particular, we obtain that $\rm{PSL}(n,\Z)\leq\rm{PGL}(n,\Q)$ is $C^*$-irreducible for any $n\in \N$, and that the inclusion of a $C^*$-simple group into its abstract commensurator is $C^*$-irreducible.

The main ingredient that we use is the fact that the action of a commensurated subgroup $\La\leq\G$ on its Furstenberg boundary $\partial_F\La$ can be extended in a unique way to an action of $\G$ on $\partial_F\La$. Finally, we also investigate the counterpart of this extension result for the universal minimal proximal space of a group.

\end{abstract}

\maketitle
\section{Introduction}

A group $\G$ is said to be \emph{$C^*$-simple} if its reduced $C^*$-algebra $C^*_r(\G)$ is simple. After the breakthrough characterizations of $C^*$-simplicity in \cite{KK17} and \cite{BKKO},  several directions of research applying the new methods in different settings arose. 

One of the recent interesting directions is investigating when inclusions of groups $\La\leq\G$ are \emph{$C^*$-irreducible}, in the sense that every intermediate $C^*$-algebra $B$ in $C^*_r(\La)\subset B \subset  C^*_r(\G)$ is simple. In \cite{R21},  Rørdam started a systematic study of this property and provided a dynamical criterion for an inclusion of groups to be $C^*$-irreducible.  Together with results in \cite{A21}, \cite{U19} and \cite{BO22}, this has provided a complete characterization of $C^*$-irreducibility of an inclusion in the case that $\La$ is a normal subgroup of $\G$.

Recall that a subgroup $\La$ of a group $\G$ is said to be \emph{commensurated} if, for any $g\in\G$, $\La\cap g\La g^{-1}$ has finite index in $\La$.  This is a much more flexible generalization of normal subgroups and finite-index subgroups. For example, for every $n\geq 2$, $\rm{PSL}(n,\Z)$ is an infinite-index commensurated subgroup of the simple group $\rm{PSL}(n,\Q)$. 

In this work, we generalize the above characterization of $C^*$-irreducibility to commensurated subgroups (see Theorem~\ref{thm:cic}).  The main ingredient in our proof is the fact that the action of $\La$ on its Furstenberg boundary $\partial_F\La$ can be uniquely extended to an action of $\G$ on $\partial_F\La$ if $\La$ is a commensurated subgroup in $\G$ (see Theorem \ref{thm:ext}). 

As one of the applications, we show that, if $\G$ is a $C^*$-simple group, then the inclusion of $\G$ in its abstract commensurator $\co(\G)$ is $C^*$-irreducible (see Corollary~\ref{cor:co}). To our best knowledge, this is also the first observation of the fact that, if $\G$ is a $C^*$-simple group, then $\co(\G)$ is $C^*$-simple as well.

Given a subgroup $\La$ of a group $\G$, Ursu introduced in \cite{U19} a universal $\La$-strongly proximal $\G$-boundary $B(\G,\La)$ and showed that, if $\La\unlhd\G$, then $B(\G,\La)=\partial_F\La$. In Section~\ref{sec:rb}, we generalize this fact to commensurated subgroups and also observe that, in general, $B(\G,\La)$ is not extremally disconnected.

Finally, we also show that, given a commensurated subgroup $\La$ of a group $\G$, the action of $\La$ on its universal minimal proximal space $\partial_p\La$ can also be extended in a unique way to an action of $\G$ on $\partial_p\La$ (see Theorem~\ref{thm:prox}), and use this fact for concluding that, for a certain locally finite commensurated subgroup $G$ of Thompson's group $V$, the resulting action of $V$ on $\partial_p G$ is free (see Example \ref{ex:v}).

\section{Preliminaries}
 Given a compact Hausdorff space $X$, we denote by $\Prob(X)$ the space of regular probability measures on $X$. An action of a group $\G$ on $X$ by homeomorphisms is said to be \emph{minimal} if $X$ does not contain any non-trivial closed invariant subset, and to be \emph{topologically free} if, for any $g\in\G\setminus\{e\}$, the set $\{x\in X:gx=x\}$ has empty interior (if $\G$ is countable, then $\G\act X$ is topologically free if and only if the set of points in $X$ which are not fixed by any non-trivial element of $\G$ is dense in $X$).  The action is said to be \emph{proximal} if, given $x,y\in X$, there is a net $(g_i)\subset\G$ such that the nets $(g_ix)$ and $(g_iy)$ converge and $\lim g_ix=\lim g_iy$.  We say that the action is \emph{strongly proximal} if the induced action $\G\act \Prob(X)$ is proximal.
The action is called a \emph{boundary action} (or $X$ is a \emph{$\G$-boundary}) if it is both minimal and strongly proximal.  We denote by $\partial_F\G$ the \emph{Furstenberg boundary} of $\G$, i.e., the universal $\G$-boundary (see \cite[Section III.1]{G76}).  The group $\G$ is $C^*$-simple if and only if $\G\act\partial_F\G$ is free (\cite[Theorem 3.1]{BKKO}).

Given $\G$-boundaries $X$ and $Y$, if there exists $\varphi\colon X\to Y$ a homeomorphism which is $\G$-equivariant ($\G$-\emph{isomorphism}), then it follows from \cite[Lemma II.4.1]{G76} that $\varphi$ is the unique $\G$-isomorphism between $X$ and $Y$.

Let $\La\leq\G$ be a finite-index subgroup.  Then any strongly proximal $\G$-action is also $\La$-strongly proximal (\cite[Lemma II.3.1]{G76}) and any $\G$-boundary is also a $\La$-boundary (\cite[Lemma II.3.2]{G76}).  Furthermore, by \cite[Theorem II.4.4]{G76}, which is stated for the universal minimal proximal space but whose proof also works for the Furstenberg boundary,  the action $\La\act\partial_F\La$ can be extended to $\G\act\partial_F\La$ and $\partial_F\La$ is $\G$-isomorphic to $\partial_F\G$.  In particular, $\partial_F\La$ and $\partial_F\G$ are also $\La$-isomorphic.

Given a group isomorphism $\psi\colon \G_1\to\G_2$,  by universality there is a unique homeomorphism $\tilde{\psi}\colon\partial_F\G_1\to\partial_F\G_2$ such that $\tilde{\psi}(gx)=\psi(g)\tilde{\psi}(x)$ for any $g\in\G_1$ and $x\in\partial_F\G_1$.

Given a group $\G$, let $\Sub(\G)$ be the space of subgroups of $\G$ endowed with the pointwise convergence topology and with the $\G$-action given by conjugation.  Given a subgroup $\La\leq \G$,  a $\La$-\emph{uniformly recurrent subgroup} (URS) is a non-empty closed $\La$-invariant minimal set $\mathcal{U}\subset\Sub(\G)$. Moreover, we say that $\mathcal{U}$ is \emph{amenable} if one (equivalently all) of its elements is amenable. By \cite[Theorem 4.1]{Ken20}, a group $\G$ is $C^*$-simple if and only if it does not admit any non-trivial amenable $\G$-uniformly recurrent subgroup. 

An inclusion of groups $\La\leq \G$ is said to be \emph{$C^*$-irreducible} if every intermediate $C^*$-algebra of $C^*_r(\La)\subset  C^*_r(\G)$ is simple. 

Given $\La\leq\G$ and $g\in\G$, let $g^\La:=\{hgh^{-1}:h\in\La\}$. We say that \emph{$\G$ is icc relatively to $\La$} if, for any $g\in\G\setminus\{e\}$, $|g^\La|<\infty$.  The group $\G$ is said to be \emph{icc} if it is icc relatively to itself.

\section{C*-irreducibility of commensurated subgroups}\label{sec:cic}

Let $\G$ be a group.  Two subgroups $\La_1,\La_2\leq\G$ are said to be \emph{commensurable} if $[\La_1:\La_1\cap\La_2]<\infty$ and $[\La_2:\La_1\cap\La_2]<\infty$.  Notice that this is an equivalence relation. 

A subgroup $\La\leq \G$ is said to be \emph{commensurated} if, for any $g\in\G$,  $\La$ is commensurable with $g\La g^{-1}$. Equivalently, for any $g\in\G$, $[\La:\La\cap g\La g^{-1}]<\infty$.  In this case, we write $\La\leq_c\G$.  In the literature, this notion is also referred to by saying that $\La$ is an \emph{almost normal subgroup} of $\G$ or that $(\G,\La)$ is a \emph{Hecke pair}.

The following result generalizes \cite[Theorem II.4.4]{G76} and \cite[Lemma 20]{O14}.

\begin{theorem}\label{thm:ext}
Let $\La\leq_c\G$. Then $\La\act\partial_F\La$ extends in a unique way to an action of $\G$ on $\partial_F\La$.

\end{theorem}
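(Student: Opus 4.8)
The plan is to bootstrap from the finite-index theory recalled above, using that conjugation by an element $g\in\G$ carries the finite-index subgroup $\La\cap g^{-1}\La g$ isomorphically onto $\La\cap g\La g^{-1}$. First I would isolate the following bookkeeping lemma, whose proof only uses \cite[Lemma II.4.1]{G76} together with the functoriality of $\partial_F$: \emph{for any finite-index subgroups $\La',\La''\leq\La$ and any group isomorphism $\psi\colon\La'\to\La''$, there is a unique homeomorphism $T_\psi$ of $\partial_F\La$ with $T_\psi(hx)=\psi(h)T_\psi(x)$ for all $h\in\La'$ and $x\in\partial_F\La$.} For existence one transports the functorial homeomorphism $\tilde\psi\colon\partial_F\La'\to\partial_F\La''$ through the canonical identifications $\partial_F\La\cong\partial_F\La'$ and $\partial_F\La\cong\partial_F\La''$ (the unique $\La$-isomorphisms provided by \cite[Theorem II.4.4]{G76}, restricted to $\La'$ resp.\ $\La''$). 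For uniqueness, if $T$ and $T'$ both work then $T'\circ T^{-1}$ is a $\La''$-equivariant self-homeomorphism of the $\La''$-boundary $\partial_F\La$, hence the identity by \cite[Lemma II.4.1]{G76}.

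With the lemma in hand, for $g\in\G$ put $\La_g:=\La\cap g\La g^{-1}$; since $\La\leq_c\G$ both $\La_g$ and $\La_{g^{-1}}$ have finite index in $\La$, and $c_g\colon h\mapsto ghg^{-1}$ restricts to an isomorphism $\La_{g^{-1}}\to\La_g$. I would then define $\rho(g):=T_{c_g|_{\La_{g^{-1}}}}$, i.e.\ $\rho(g)$ is the unique self-homeomorphism of $\partial_F\La$ with $\rho(g)(hx)=(ghg^{-1})\rho(g)(x)$ for all $h\in\La_{g^{-1}}$. Two quick checks follow from uniqueness: $\rho(e)=\id$, and if $g\in\La$ then $\La_{g^{-1}}=\La$ and $x\mapsto gx$ visibly has the defining property, so $\rho(g)$ is the original action of $g$. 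Thus $\rho$ restricts to $\La\act\partial_F\La$ and it remains to verify the cocycle identity $\rho(g_1)\rho(g_2)=\rho(g_1g_2)$.

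This cocycle identity is the heart of the matter and the one place where commensuration is genuinely used. The idea is to compare the two homeomorphisms on the subgroup $K:=\{h\in\La_{g_2^{-1}}:g_2hg_2^{-1}\in\La_{g_1^{-1}}\}=\La_{g_2^{-1}}\cap g_2^{-1}g_1^{-1}\La g_1g_2$. One checks $K$ has finite index in $\La$: $\La_{g_2^{-1}}$ does by commensuration, and $g_2^{-1}g_1^{-1}\La g_1g_2$ is commensurable with $g_2^{-1}\La g_2$, which is commensurable with $\La$, so $\La\cap g_2^{-1}g_1^{-1}\La g_1g_2$ has finite index as well; moreover $K\subseteq\La_{(g_1g_2)^{-1}}$, so $c_{g_1g_2}|_K$ is an isomorphism onto a finite-index subgroup of $\La$ and $T_{c_{g_1g_2}|_K}$ is defined. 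Unwinding the definitions, for $h\in K$ one computes $\rho(g_1)\rho(g_2)(hx)=(g_1g_2)h(g_1g_2)^{-1}\,\rho(g_1)\rho(g_2)(x)$ and likewise $\rho(g_1g_2)(hx)=(g_1g_2)h(g_1g_2)^{-1}\,\rho(g_1g_2)(x)$; hence both sides satisfy the defining relation of $T_{c_{g_1g_2}|_K}$, and the uniqueness from the first step forces them to coincide. This produces an action $\G\act\partial_F\La$ extending $\La\act\partial_F\La$.

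Finally, uniqueness of the extension is immediate from the same principle: if $\sigma$ is any $\G$-action on $\partial_F\La$ restricting to the given one, then for $g\in\G$ and $h\in\La_{g^{-1}}$ the identity $gh=(ghg^{-1})g$ in $\G$ (with $ghg^{-1}\in\La$) shows $\sigma(g)(hx)=(ghg^{-1})\sigma(g)(x)$, so $\sigma(g)$ obeys the defining relation of $\rho(g)=T_{c_g|_{\La_{g^{-1}}}}$ and therefore equals it. I expect the main obstacle to be organizing the finite-index subgroups cleanly — in particular checking that the subgroup $K$ above is finite-index, which is exactly the point where the hypothesis $\La\leq_c\G$ (rather than $\La$ being an arbitrary subgroup) is used.
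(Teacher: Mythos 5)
Your proposal is correct and follows essentially the same route as the paper: your bookkeeping lemma (transporting an isomorphism of finite-index subgroups through the canonical identifications of Furstenberg boundaries and using rigidity of boundary isomorphisms) is exactly the paper's construction $T_g=(\varphi_g)^{-1}\psi_g\varphi_{g^{-1}}$, and the cocycle identity and compatibility with the $\La$-action are verified by the same rigidity argument on a finite-index subgroup such as $\La\cap g_2^{-1}\La g_2\cap(g_1g_2)^{-1}\La(g_1g_2)$. Your explicit treatment of uniqueness of the extension is a welcome addition that the paper leaves implicit.
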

\begin{proof}
Given $g\in\G$, let $\varphi_g\colon\partial_F\La\to\partial_F(\La\cap g\La g^{-1})$ be the $(\La\cap g\La g^{-1})$-isomorphism. Also let $\psi_g\colon\partial_F(\La\cap g^{-1}\La g)\to\partial_F(\La\cap g\La g^{-1})$ be the homeomorphism such that for all $h\in \La\cap g^{-1}\La g$ and $x\in\partial_F( \La\cap g^{-1}\La g)$ we have $\psi_g(hx)=ghg^{-1}\psi_g(x)$. Let $T_g:=(\varphi_g)^{-1}\psi_g\varphi_{g^{-1}}\colon \partial_F\La\to\partial_F\La$.  We claim that $g\mapsto T_g$ is a $\G$-action which extends $\La\act\partial_F\La$. 

Given $h\in \La\cap g^{-1}\La g$ and $x\in\partial_F\La$, one can readily check that $T_g(hx)=ghg^{-1}T_g(x)$.

Given $g,h\in\G$,  we have that $[\La:\La\cap h^{-1}\La h\cap (gh)^{-1}\La(gh)]<\infty$. Furthermore, given $k\in\La\cap h^{-1}\La h\cap (gh)^{-1}\La(gh)$ and $x\in\partial_F\La$, we have $T_{gh}(kx)=(gh)k(gh)^{-1}T_{gh}(x)$. On the other hand, $T_gT_h(kx)=(gh)k(gh)^{-1}T_gT_h(x)$.  In particular, $(T_gT_h)^{-1}T_{gh}$ is a $(\La\cap h^{-1}\La h\cap (gh)^{-1}\La(gh))$-automorphism,  hence $T_{gh}=T_gT_h$.

Finally, given $g\in\La$, we have that $x\mapsto g^{-1}T_g(x)$ is a $(\La\cap g^{-1}\La g)$-automorphism, so that $g^{-1}T_g=\Id_{\partial_F\La}$.
\end{proof}

\begin{remark}
The existence part of Theorem \ref{thm:ext} was shown by Dai and Glasner in \cite[Theorem 6.1]{DG19} using a different method and assuming that $\G$ is countable. 

\end{remark}

Given a subset $S$ of a group $\G$, let $C_\G(S)$ be the \emph{centralizer} of $S$ in $\G$. 
In the next result, we follow the argument of \cite[Lemma 5.3]{BKKO}.
\begin{lemma}\label{lem:fix}
Let $\La\leq_c\G$ and consider $\G\act\partial_F\La$. Given $s\in\G$, if $s\in \mathrm{C}_\G(\La\cap s^{-1}\La s)$, then $\Fix(s)=\partial_F\La$.  Conversely, if $\La\act\partial_F\La$ is free and $\Fix(s)\neq\emptyset$, then $s\in \mathrm{C}_\G(\La\cap s^{-1}\La s)$.
\end{lemma}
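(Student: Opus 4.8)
The plan is to mimic the classical argument of \cite[Lemma 5.3]{BKKO} but with the normal subgroup replaced by the commensurated subgroup $\La\cap s^{-1}\La s$, and to exploit the explicit description of the $\G$-action on $\partial_F\La$ coming from Theorem~\ref{thm:ext}. Throughout write $\Delta:=\La\cap s^{-1}\La s$, a finite-index subgroup of $\La$, so $\partial_F\La$ is also a $\Delta$-boundary and is $\Delta$-isomorphic to $\partial_F\Delta$ (via the map $\varphi_s$ from the proof of Theorem~\ref{thm:ext}, or rather $\varphi_{s^{-1}}$, up to bookkeeping). Recall from that proof that $s$ acts by $T_s(hx)=shs^{-1}T_s(x)$ for every $h\in\Delta$ and $x\in\partial_F\La$.

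For the first implication, suppose $s\in\mathrm{C}_\G(\Delta)$, i.e. $shs^{-1}=h$ for all $h\in\Delta$. Then the identity $T_s(hx)=shs^{-1}T_s(x)=hT_s(x)$ says precisely that $T_s\colon\partial_F\La\to\partial_F\La$ is a $\Delta$-equivariant homeomorphism. Since $\partial_F\La$ is a $\Delta$-boundary, uniqueness of $\G$-isomorphisms between boundaries (\cite[Lemma II.4.1]{G76}, as invoked in the Preliminaries) forces $T_s=\Id_{\partial_F\La}$, hence $\Fix(s)=\partial_F\La$. This direction is essentially formal once one has the equivariance relation for $T_s$.

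For the converse, assume $\La\act\partial_F\La$ is free and pick $x_0\in\Fix(s)$, i.e. $T_s(x_0)=x_0$. I want to show $shs^{-1}=h$ for every $h\in\Delta$. Fix such an $h$; then $h$ and $shs^{-1}$ are both in $\La$ (the latter because $h\in s^{-1}\La s$). Applying the equivariance relation at $x_0$ gives $h x_0 = T_s(h x_0)\cdot$ —more carefully: from $T_s(hx_0)=shs^{-1}T_s(x_0)=shs^{-1}x_0$ and separately the chain of equalities one extracts that $h x_0$ and $(shs^{-1})x_0$ are comparable under $T_s$; the cleanest route is to observe that $k:=h^{-1}shs^{-1}\in\La$ (indeed $k\in\Delta$ after a short check, since both $h$ and $shs^{-1}$ lie in $\La\cap s^{-1}\La s$ up to the standard index manipulation) satisfies $k x_0 = x_0$. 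By freeness of $\La\act\partial_F\La$ this yields $k=e$, i.e. $shs^{-1}=h$. Since $h\in\Delta$ was arbitrary, $s\in\mathrm{C}_\G(\Delta)$.

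The step I expect to require the most care is verifying that the element produced in the converse — morally $h^{-1}(shs^{-1})$ evaluated against the fixed point — genuinely lies in $\La$ and genuinely fixes $x_0$, so that freeness applies. This is exactly where one must use that $T_s$ intertwines the $\Delta$-action with its conjugate and that $x_0$ is $T_s$-fixed, combined with the fact that $\Delta$ has finite index in $\La$ so that one stays inside $\La$ throughout; modulo this bookkeeping the argument is a direct transcription of \cite[Lemma 5.3]{BKKO}.
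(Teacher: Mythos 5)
Your first implication is fine and coincides with the paper's argument: the relation $T_s(hx)=shs^{-1}T_s(x)=hT_s(x)$ exhibits $T_s$ as a $(\La\cap s^{-1}\La s)$-equivariant homeomorphism of $\partial_F\La$, which is a boundary for the finite-index subgroup $\La\cap s^{-1}\La s$, so rigidity of boundary maps forces $T_s=\Id_{\partial_F\La}$.

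The converse, however, has a genuine gap, precisely at the step you flagged. Write $\Delta:=\La\cap s^{-1}\La s$. For $h\in\Delta$ and $x_0\in\Fix(s)$, equivariance only gives $s(hx_0)=(shs^{-1})x_0$; to conclude that $k:=h^{-1}shs^{-1}$ fixes $x_0$ you would need $s(hx_0)=hx_0$, i.e.\ $hx_0\in\Fix(s)$, and there is no reason for $\Fix(s)$ to be $\Delta$-invariant. So freeness of $\La\act\partial_F\La$ cannot be applied to an arbitrary $h\in\Delta$ at the single fixed point $x_0$, and the "short check" you allude to does not exist. The paper's proof repairs exactly this point: it considers only the set $A=\{t\in\Delta:\ t\Fix(s)\cap\Fix(s)\neq\emptyset\}$; for $t\in A$ one picks $y\in\Fix(s)\cap t^{-1}\Fix(s)$, so that both $y$ and $ty$ are fixed by $s$, whence $ty=s(ty)=(sts^{-1})y$, and freeness (note $t,sts^{-1}\in\La$) gives $t=sts^{-1}$. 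This shows $s$ centralizes $A$, not yet $\Delta$; the missing ingredient is \cite[Lemma 5.1]{BKKO}, which guarantees that the set $A$ of elements moving the nonempty closed set $\Fix(s)$ back into itself generates $\Delta$ (here one uses that $\partial_F\La$ is a $\Delta$-boundary because $[\La:\Delta]<\infty$). Without this generation argument, or some substitute for it, your converse does not go through.
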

\begin{proof}
 If $s\in\mathrm{C}_\G(\La\cap s^{-1}\La s)$, then, given $h\in\La\cap s^{-1}\La s$ and $x\in \partial_F\La$, we have $s(hx)=hs(x)$. Since $[\La:\La\cap s^{-1}\La s]<\infty$, we conclude that $s$ acts trivially on $\partial_F\La$.

Suppose now that $\La\act\partial_F\La$ is free and $\Fix(s)\neq\emptyset$.  Given $t\in A:=\{t\in\La\cap s^{-1}\La s:t\Fix(s)\cap\Fix(s)\neq\emptyset\}$, we have that the action of $sts^{-1}$ and $t$ coincide on $\Fix(s)\cap t^{-1}\Fix(s)$. Since $sts^{-1},t\in\La$ and $\La\act\partial_F\La$ is free, we obtain that $t=sts^{-1}$. Since,  by \cite[Lemma 5.1]{BKKO}, $A$ generates $\La\cap s^{-1}\La s$, we conclude that $s\in\mathrm{C}_\G(\La\cap s^{-1}\La s)$.
\end{proof}

The proof of the following result is an adaptation of the argument in \cite[Remark 4.2]{Ken20} and its hypothesis is the same as in \cite[Theorem 5.3.(ii)]{R21}.
\begin{proposition}\label{prop:urs}
Let $\La\leq\G$.  Suppose that there exists a $\G$-boundary $X$ such that, for any $\mu\in\Prob(X)$, there exists a net $(g_i)\subset\La$ such that $g_i\mu$ converges to $\delta_x$, for some $x\in X$, on which $\G$ acts freely. Then $\G$ does not admit any non-trivial amenable $\La$-URS.
\end{proposition}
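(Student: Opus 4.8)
The plan is to show that an amenable $\La$-URS $\cU \subset \Sub(\G)$ must be trivial by exhibiting that the only group in $\cU$ is $\{e\}$. The natural route is to use the $\G$-boundary $X$ together with a $\La$-equivariant (in fact $\G$-equivariant in an appropriate sense) map from a suitable space into $\Sub(\G)$ and then exploit amenability of the stabilizers.

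First I would recall the standard ``stabilizer URS'' mechanism following Glasner–Weiss and Kennedy: given a minimal action of a group $H$ on a compact space $Z$ with a comeager orbit of stabilizers, one gets an $H$-URS $\st(Z) \subset \Sub(H)$. The key input here, as in \cite[Remark 4.2]{Ken20}, is the following dichotomy for amenable URSs. Suppose $\cU$ is a non-trivial amenable $\La$-URS, pick $H_0 \in \cU$ amenable and non-trivial, and let $\cU$ act minimally on itself; by amenability of $H_0$ there is an $H_0$-invariant (hence, after inducing/pushing forward, a $\La$-invariant) probability measure on $\cU$ or, more usefully, one produces an $H_0$-fixed point in $\Prob(X)$ for the given $\G$-boundary $X$. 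Concretely: restrict $\G \act X$ to the amenable subgroup $H_0 \leq \G$; since $H_0$ is amenable there exists $\mu \in \Prob(X)$ with $H_0 \mu = \mu$.

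Now I apply the hypothesis to this $\mu$: there is a net $(g_i) \subset \La$ with $g_i \mu \to \delta_x$ for some $x \in X$ on which $\G$ acts freely. Then $g_i H_0 g_i^{-1}$ fixes $g_i \mu$, and passing to a convergent subnet $g_i H_0 g_i^{-1} \to H_\infty$ in $\Sub(\G)$ (using compactness of $\Sub(\G)$), the limit group $H_\infty$ fixes $\delta_x$, i.e. $H_\infty \leq \Stab_\G(x) = \{e\}$ by freeness. The standard semicontinuity argument for the stabilizer map, combined with the fact that $\cU$ is minimal and closed under conjugation, then forces every element of $\cU$ to be contained in (a conjugate of) a limit of $\La$-conjugates of itself, hence to be trivial; more precisely, since $\cU$ is $\La$-minimal, the orbit $\overline{\{g_i H_0 g_i^{-1}\}}$ meets $\cU$ and its closure contains $\{e\}$, but $\{e\}$ is $\La$-invariant and $\cU$ is minimal, so $\cU = \{\{e\}\}$, a contradiction.

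The main obstacle I anticipate is the bookkeeping around which group one pushes forward and in what order: one needs an $H_0 \in \cU$ and a genuinely $H_0$-fixed measure $\mu \in \Prob(X)$ (amenability of $H_0$ gives this), and then one must check that the limit of the conjugates $g_i H_0 g_i^{-1}$ both lies in the closure of the $\La$-orbit of $H_0$ inside $\Sub(\G)$ (so that minimality of $\cU$ applies) and is killed by the freeness of $\G \act x$. The subtle point is that $g_i \in \La$ but $H_0$ need not be a subgroup of $\La$, so $g_i H_0 g_i^{-1}$ lives in $\Sub(\G)$, not $\Sub(\La)$ — one must make sure the URS $\cU$ is taken inside $\Sub(\G)$ (as it is, by definition of a $\La$-URS in the preliminaries) and that the conjugation action of $\La$ on $\Sub(\G)$ is the relevant one. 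Once these compatibilities are arranged, the contradiction with minimality is immediate, and I would present the argument as: (i) extract an amenable non-trivial $H_0 \in \cU$; (ii) get $\mu \in \Prob(X)^{H_0}$; (iii) apply the hypothesis and take a subnet limit $g_i H_0 g_i^{-1} \to H_\infty$; (iv) conclude $H_\infty = \{e\}$ by freeness; (v) invoke minimality of $\cU$ to deduce $\cU = \{\{e\}\}$.
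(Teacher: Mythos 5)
Your proposal is correct and takes essentially the same route as the paper: pick an amenable $K\in\mathcal{U}$, use amenability to get a $K$-fixed $\mu\in\Prob(X)$, apply the hypothesis to get $g_i\mu\to\delta_x$ with $(g_i)\subset\La$, pass to a subnet so that $g_iKg_i^{-1}$ converges in $\Sub(\G)$, and use pointwise convergence plus freeness of $\G$ at $x$ to kill the limit group. The only cosmetic difference is in where the contradiction is drawn: the paper uses minimality and non-triviality of $\mathcal{U}$ to pick a non-trivial element $g$ of the limit group and contradicts freeness directly, while you show the limit group is $\{e\}$ and then contradict non-triviality of $\mathcal{U}$ via minimality; these are the same argument.
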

\begin{proof}
Suppose $\mathcal{U}$ is a non-trivial amenable $\La$-URS, and take $K\in \mathcal{U}$.  Since $K$ is amenable, there exists $\mu\in\Prob(X)$ fixed by $K$. Let $(g_i)\subset \La$ be a net such that $g_i\mu\to\delta_x$, for some $x\in X$, on which $\G$ acts freely. By taking a subnet, we may assume that $g_iKg_i^{-1}\to L\in\Sub(\G)$. Take $g\in L\setminus\{e\}$ and $(k_i)\subset K$ such that $g_ik_ig_i^{-1}=g$ for $i$ sufficiently big. Then $$\delta_x=\lim g_i\mu =\lim g_ik_i\mu=\lim g_ik_ig_i^{-1}g_i\mu=g\delta_x,$$
contradicting the fact that $\G$ acts freely on $x$.
\end{proof}

The following result generalizes \cite[Theorems 1.3 and 1.9]{U19} and \cite[Theorem 6.4]{BO22}, as well as the claim about finite-index subgroups in \cite[Theorem 5.3]{R21}.
\begin{theorem}\label{thm:cic}
Let $\La\leq_c \G$. The following conditions are equivalent:

\begin{enumerate}
\item $\La\leq\G$ is $C^*$-irreducible;
\item $\La$ is $C^*$-simple and $\G$ is icc relatively to $\La$;
\item $\La$ is $C^*$-simple and, for any $s\in\G\setminus\{e\}$, we have that $s\notin {C}_\G(\La\cap s^{-1}\La s)$;
\item $\G\act \partial_F\La$ is free;

\item There is no non-trivial amenable $\La$-URS of $\G$;
\item $\La$ is $C^*$-simple and $\G\act\partial_F\La$ is faithful.
\end{enumerate}

\end{theorem}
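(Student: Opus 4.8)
The plan is to prove the cycle of implications $(1)\Rightarrow(2)\Rightarrow(3)\Rightarrow(4)\Rightarrow(5)\Rightarrow(1)$, together with $(4)\Leftrightarrow(6)$, invoking the three lemmas already established (Theorem~\ref{thm:ext}, Lemma~\ref{lem:fix}, Proposition~\ref{prop:urs}) and R{\o}rdam's dynamical criterion from \cite{R21}. For $(1)\Rightarrow(2)$: a $C^*$-irreducible inclusion forces $C^*_r(\La)$ itself to be simple (it is an intermediate algebra), so $\La$ is $C^*$-simple; and if $|g^\La|=\infty$ failed for some $g\neq e$, the finite conjugacy class $g^\La$ would generate a non-trivial finite normal-in-$\overline{\langle g^\La\rangle}$ subgroup giving rise to a non-simple intermediate algebra — more cleanly, I would derive non-$C^*$-simplicity or a non-simple intermediate algebra from the existence of such $g$ via the standard FC-center obstruction, exactly as in the normal-subgroup case in \cite{R21},\cite{BO22}.

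For $(2)\Leftrightarrow(3)$: this is an elementary group-theoretic reformulation. If $|g^\La|<\infty$ then $\La\cap C_\La(g)=C_{\La}(g)$ has finite index in $\La$, and intersecting with $\La\cap g^{-1}\La g$ (also finite index since $\La\leq_c\G$) shows $g$ centralizes a finite-index subgroup of $\La\cap g^{-1}\La g$; but a subgroup of $\La\cap g^{-1}\La g$ that is centralized and has finite index forces (using \cite[Lemma 5.1]{BKKO}-type generation, or directly the fact that $g$ normalizes and the quotient is finite) $g\in C_\G(\La\cap g^{-1}\La g)$ after passing to a suitable conjugate power — I expect the cleanest route is to show $|g^\La|<\infty$ for \emph{all} $g\neq e$ is equivalent to $\Fix(s)=\partial_F\La$ for some $s\neq e$ via Lemma~\ref{lem:fix}, bundling $(2),(3),(4)$ together. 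Concretely: $(3)\Rightarrow(4)$ is immediate from Lemma~\ref{lem:fix} (if $\G\act\partial_F\La$ is not free, some $s\neq e$ has $\Fix(s)\neq\emptyset$; since $\La$ is $C^*$-simple, $\La\act\partial_F\La$ is free, so the converse direction of Lemma~\ref{lem:fix} gives $s\in C_\G(\La\cap s^{-1}\La s)$, contradicting $(3)$); and $(4)\Rightarrow(3)$ uses the forward direction of Lemma~\ref{lem:fix} ($s\in C_\G(\La\cap s^{-1}\La s)$ would give $\Fix(s)=\partial_F\La\neq\emptyset$) plus the observation that freeness of $\G\act\partial_F\La$ restricts to freeness of $\La\act\partial_F\La$, hence $\La$ is $C^*$-simple by \cite[Theorem 3.1]{BKKO}.

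For $(4)\Rightarrow(5)$: apply Proposition~\ref{prop:urs} with $X=\partial_F\La$. The hypothesis of that proposition needs: for every $\mu\in\Prob(\partial_F\La)$ there is a net in $\La$ pushing $\mu$ to a point mass $\delta_x$ with $\G$ acting freely at $x$. Since $\partial_F\La$ is a $\La$-boundary, $\La\act\Prob(\partial_F\La)$ is proximal, so any $\mu$ can be pushed arbitrarily close to \emph{some} $\delta_x$ by a net in $\La$; and by $(4)$ every $x$ has trivial $\G$-stabilizer, so the freeness condition is automatic. Then Proposition~\ref{prop:urs} yields $(5)$. For $(5)\Rightarrow(1)$: this is where R{\o}rdam's criterion \cite[Theorem 5.3 / the main dynamical theorem]{R21} enters — the absence of non-trivial amenable $\La$-URS of $\G$ is precisely (a reformulation of) the condition guaranteeing $C^*$-irreducibility; I would cite it, possibly after noting that $(5)$ also forces $\La$ itself $C^*$-simple (taking the URS valued in subgroups of $\La$) so that $C^*_r(\La)$ is simple as the base case. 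Finally $(4)\Leftrightarrow(6)$: $(4)\Rightarrow(6)$ is trivial since free implies faithful, and restricting freeness to $\La$ gives $\La$ $C^*$-simple; conversely, given $\La$ $C^*$-simple and $\G\act\partial_F\La$ faithful, suppose $s\neq e$ has $\Fix(s)\neq\emptyset$ — freeness of $\La\act\partial_F\La$ and Lemma~\ref{lem:fix} give $s\in C_\G(\La\cap s^{-1}\La s)$, whence $\Fix(s)=\partial_F\La$, i.e. $s$ acts trivially, contradicting faithfulness; so $\G\act\partial_F\La$ is free.

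The main obstacle I anticipate is the implication $(1)\Rightarrow(2)$ (or its contrapositive), specifically producing a non-simple intermediate $C^*$-algebra when $\G$ fails to be icc relative to $\La$: one must build, from a finite $\La$-conjugacy class $g^\La$, an intermediate algebra — most naturally $C^*_r(\La)$ together with the element $\sum_{h\in g^\La}\lambda_h$ or a projection in the (finite-dimensional, since $g^\La$ is finite) von Neumann subalgebra it generates, which is central enough to obstruct simplicity. Getting this to land properly inside $C^*_r(\G)$ and genuinely failing simplicity — rather than just failing to be simple after weak closure — requires the FC-hypercenter machinery; alternatively one short-circuits this by observing that $(2),(3),(4),(5),(6)$ are equivalent among themselves by the above and that \cite{R21}'s criterion already shows $(5)\Leftrightarrow(1)$, so $(1)\Rightarrow(2)$ comes for free once the rest of the cycle is closed. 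I would adopt this latter organization to minimize the delicate $C^*$-algebraic argument, leaning on \cite[Theorem 5.3]{R21} for the one genuinely operator-algebraic link.
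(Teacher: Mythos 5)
Your organization hinges on the step $(5)\Rightarrow(1)$, and that step is not available in the form you cite it. Rørdam's \cite[Theorem 5.3]{R21} is a dynamical criterion: it requires a $\G$-boundary $X$ such that every $\mu\in\Prob(X)$ can be pushed by a net from $\La$ to a point mass at a point with trivial $\G$-stabilizer --- exactly the hypothesis of Proposition~\ref{prop:urs}. It does \emph{not} assert that the absence of non-trivial amenable $\La$-URS of $\G$ implies $C^*$-irreducibility; whether that implication (equivalently, the converse of Proposition~\ref{prop:urs}) holds in general is precisely the open question recorded in the remark following the theorem. So in your proposal nothing flows out of condition (5): you establish $(4)\Rightarrow(5)$, but the only arrow leaving (5) is the unjustified citation, the cycle does not close, and in particular $(1)\Rightarrow(2)$ does not ``come for free.'' The fix is essentially the paper's route: the verification you wrote for $(4)\Rightarrow(5)$ (proximality of $\La\act\Prob(\partial_F\La)$ pushes any measure to a point mass, and freeness of the extended $\G$-action makes the stabilizer condition automatic) is exactly what feeds \cite[Theorem 5.3]{R21} to give $(4)\Rightarrow(1)$ directly; condition (5) is then tied in by proving $(5)\Rightarrow(2)$ separately (if $\La$ is not $C^*$-simple, Kennedy's theorem \cite[Theorem 4.1]{Ken20} produces a non-trivial amenable $\La$-URS, and if some $s\neq e$ has finite $\La$-class, the finite $\La$-orbit of $\langle s\rangle$ yields one); and $(1)\Rightarrow(2)$ is obtained by citing \cite[Remark 3.8 and Proposition 5.1]{R21} rather than redoing an FC-center argument.

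A secondary issue is your treatment of $(2)\Leftrightarrow(3)$. You argue that $|g^\La|<\infty$ forces $g\in C_\G(\La\cap g^{-1}\La g)$ by elementary group theory (``after passing to a suitable conjugate power''). That implication is false in general: an element may centralize a finite-index subgroup of $\La\cap g^{-1}\La g$ without centralizing all of it (think of a non-central element of a finite direct factor), and in the theorem this direction is only recovered by going around the loop through the boundary (the $C^*$-simplicity hypothesis is essential there). Fortunately that direction is not needed for the cycle: what is needed is $(2)\Rightarrow(3)$, whose contrapositive is the short computation that $s\in C_\G(\La\cap s^{-1}\La s)$ makes $s^\La=\{g_1sg_1^{-1},\dots,g_nsg_n^{-1}\}$ finite, where $g_1,\dots,g_n$ are coset representatives of $\La\cap s^{-1}\La s$ in $\La$; you should state this and drop the converse claim. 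Your $(3)\Leftrightarrow(4)$ and $(4)\Leftrightarrow(6)$ arguments via Lemma~\ref{lem:fix} are fine and agree with the paper.
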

\begin{proof}
(1)$\implies$(2) follows from \cite[Remark 3.8 and Proposition 5.1]{R21}.

(2)$\implies$(3). Suppose that there is $s\in\G\setminus\{e\}$ such that $s\in {C}_\G(\La\cap s^{-1}\La s)$. Take $g_1,\dots,g_n\in\La$ left coset representatives for $\frac{\La}{\La\cap s^{-1}\La s}$.  Then $$s^\La=\{g_iksk^{-1}g_i^{-1}:1\leq i\leq n,k\in \La\cap s^{-1}\La s\}=\{g_isg_i^{-1}:1\leq i\leq n\}$$ is finite.

(3)$\implies$(4) follows from Lemma \ref{lem:fix}.

(4)$\implies$(1) follows from \cite[Theorem 5.3]{R21}.

(5)$\implies$(2). If $\La$ is not $C^*$-simple, then it contains a non-trivial amenable $\La$-uniformly recurrent subgroup. If $\G$ is not icc relatively to $\La$, there exists $s\in\G\setminus\{e\}$ such that $s^\La$ is finite. Hence the $\La$-orbit of $\langle s\rangle $ is a finite non-trivial amenable $\La$-uniformly recurrent subgroup.

(4)$\implies$(5) follows from Proposition \ref{prop:urs}.

(3)$\iff$(6) follows from Lemma \ref{lem:fix}.
\end{proof}

\begin{remark}
In \cite[Theorem 5.3]{R21}, Rørdam showed that an inclusion $\La\leq\G$ satisfying the hypothesis of Proposition \ref{prop:urs} is $C^*$-irreducible, and asked whether the converse holds. We do not know whether the converse of Proposition \ref{prop:urs} holds and whether the absence of non-trivial amenable $\La$-URS of $\G$ is equivalent to $\La\leq\G$ being $C^*$-irreducible in general.
\end{remark}

\begin{corollary}\label{cor:psl}
Given $n\in\N$, the inclusion $$\rm{PSL}(n,\Z)\leq\mathrm{PGL}(n,\Q)$$ is $C^*$-irreducible.
\end{corollary}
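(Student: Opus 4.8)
The plan is to verify condition~(2) of Theorem~\ref{thm:cic} for the inclusion $\La:=\mathrm{PSL}(n,\Z)\leq\mathrm{PGL}(n,\Q)=:\G$, where $\mathrm{PGL}(n,\Q):=\mathrm{GL}(n,\Q)/\Q^{\times}\Id$ and $\La$ is identified with the image $\mathrm{PSL}(n,\Z)$ of $\mathrm{SL}(n,\Z)$. For $n=1$ both groups are trivial and there is nothing to check, so assume $n\geq 2$; one then has to establish (i) that $\La$ is commensurated in $\G$, (ii) that $\La$ is $C^{*}$-simple, and (iii) that $\G$ is icc relatively to $\La$. For (i), $\mathrm{SL}(n,\Z)$ is commensurated in $\mathrm{GL}(n,\Q)$: given $\tilde g\in\mathrm{GL}(n,\Q)$, choose $N\in\N$ with $N\tilde g$ and $N\tilde g^{-1}$ integral, and check that $\tilde g$ conjugates the principal congruence subgroup $\Gamma(N^{2})$ into $\mathrm{SL}(n,\Z)$; applying the quotient map $\pi\colon\mathrm{GL}(n,\Q)\to\G$, and using that surjections send finite-index subgroups to finite-index subgroups, one gets $\La\leq_{c}\G$. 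For (ii), the $C^{*}$-simplicity of $\mathrm{PSL}(n,\Z)$ (for $n\geq 2$) is classical, due to Bekka, Cowling and de la Harpe.

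The substance is (iii): for every $g\in\G\setminus\{e\}$ the $\La$-conjugacy class $g^{\La}$ should be infinite, i.e.\ $C_{\La}(g)$ should have infinite index in $\La$. I would argue by contradiction. If $C_{\La}(g)$ had finite index, lift $g$ to a non-scalar matrix $\tilde g\in\mathrm{GL}(n,\Q)$ and pull $C_{\La}(g)$ back along $\pi|_{\mathrm{SL}(n,\Z)}$ to a finite-index subgroup $\widetilde C\leq\mathrm{SL}(n,\Z)$. For $h\in\widetilde C$ one has $h\tilde g h^{-1}=c(h)\tilde g$ for some $c(h)\in\Q^{\times}$; comparing determinants gives $c(h)^{n}=1$, so $h\mapsto c(h)$ is a homomorphism from $\widetilde C$ to the finite group of $n$-th roots of unity in $\Q^{\times}$, and its kernel $\widetilde C_{0}$ is still of finite index in $\mathrm{SL}(n,\Z)$. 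Now $\mathrm{SL}(n,\Z)$ is Zariski dense in $\mathrm{SL}_{n}$ (viewed as an algebraic group): it contains $\Id+E_{ij}$ for all $i\neq j$, whose cyclic subgroups are Zariski dense in the root subgroups, which generate $\mathrm{SL}_{n}$; and a finite-index subgroup of a Zariski-dense subgroup of a connected algebraic group is again Zariski dense, since a connected group cannot be covered by finitely many translates of a proper closed subgroup. Hence $\tilde g$ commutes with the Zariski-dense set $\widetilde C_{0}$, so with all of $\mathrm{SL}_{n}$, so in particular with every $\Id+E_{ij}$; a direct computation then forces $\tilde g$ to be scalar, a contradiction. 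Once (iii) is in place, Theorem~\ref{thm:cic} yields $C^{*}$-irreducibility at once.

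The main obstacle is precisely this last step — ruling out finite $\La$-conjugacy classes in $\G$ — together with the bookkeeping of passing between $\mathrm{SL}_{n}/\mathrm{GL}_{n}$ and $\mathrm{PSL}_{n}/\mathrm{PGL}_{n}$: the scalar ambiguity is what produces the auxiliary homomorphism $c$, and it is harmless only because $\Q$ contains just finitely many roots of unity. An essentially identical computation verifies condition~(3) of Theorem~\ref{thm:cic} instead, starting from an element $s\in\G\setminus\{e\}$ that centralizes the finite-index subgroup $\La\cap s^{-1}\La s$ of $\La$, so either route works.
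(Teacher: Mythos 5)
Your proposal is correct and follows the same high-level route as the paper: both verify condition (2) of Theorem~\ref{thm:cic}, quoting Bekka--Cowling--de la Harpe for $C^*$-simplicity of $\mathrm{PSL}(n,\Z)$ and then checking commensuration and relative icc-ness. Where you differ is in how those two conditions are verified. For commensuration the paper simply cites a result of Krieg that the unit group $U(n,\Z)$ is commensurated in $\mathrm{GL}(n,\Q)$ and passes to $\mathrm{SL}(n,\Z)$ and then to quotients, whereas you give a short self-contained argument via principal congruence subgroups (conjugating $\Gamma(N^2)$ by a matrix $\tilde g$ with $N\tilde g$, $N\tilde g^{-1}$ integral), which is elementary and checks out. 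For the relative icc condition the paper argues directly: conjugating a nontrivial class $[a]$ by the elementary matrices $[\mathrm{Id}+m e_{ij}]$, $m\in\Z$, and asserting (without detail) that infinitely many distinct conjugates arise; you instead argue by contradiction, using the scalar-ambiguity homomorphism $c\colon \widetilde C\to\{\text{$n$-th roots of unity in }\Q^\times\}$ and Zariski density of finite-index subgroups of $\mathrm{SL}(n,\Z)$ in $\mathrm{SL}_n$ to force a non-scalar lift to be central, hence scalar. Your version is less elementary in the tools it invokes (algebraic groups and Zariski density) but is fully rigorous where the paper leaves an ``easy to see'' computation to the reader, and it handles cleanly the bookkeeping of scalars in passing from $\mathrm{GL}_n/\mathrm{SL}_n$ to $\mathrm{PGL}_n/\mathrm{PSL}_n$, which is exactly the point the paper's sketch glosses over; your closing remark that the same computation verifies condition (3) of Theorem~\ref{thm:cic} is also accurate.
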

\begin{proof}
It was shown in \cite{BCdlH94} that $\mathrm{PSL}(n,\Z)$ is $C^*$-simple.  

Let $U(n,\Z)$ be the group of units of the ring $M_n(\Z)$. By \cite[Corollary V.5.3]{K90}, $U(n,\Z)\leq_c\rm{GL}(n,\Q)$. Since $[U(n,\Z):\rm{SL}(n,\Z)]=2$,  we conclude that $\rm{SL}(n,\Z)\leq_c\rm{GL}(n,\Q)$ as well. Since taking quotients preserves being commensurated, it follows that $\rm{PSL}(n,\Z)\leq_c\mathrm{PGL}(n,\Q)$. 

Let $(e_{ij})_{1\leq i,j\leq n}\in M_n(\Z)$ be the matrix units and fix $[a]\in\rm{PGL}(n,\Q)\setminus\{\rm{[Id]}\}$. By taking conjugates of $[a]$ by elements of the form $[\rm{Id}+m\cdot e_{ij}]\in\rm{PSL}(n,\Z)$,  $m\in\Z$, $1\leq i\neq j\leq n$, it is easy to see that $[a]^{\rm{PSL}(n,\Z)}$ is infinite, so that $\mathrm{PGL}(n,\Q)$ is icc relatively to $\rm{PSL}(n,\Z)$.

The conclusion then follows from Theorem \ref{thm:cic}.
\end{proof}

\begin{remark}
Let us sketch a different proof of Corollary \ref{cor:psl} which gives the stronger statement that $\rm{PSL}(n,\Z)\leq\rm{PGL}(n,\R)$ is $C^*$-irreducible, where $\rm{PGL}(n,\R)$ is seen as a discrete group.  

Clearly, it suffices to show that, for any countable group $\G$ such that $\rm{PSL}(n,\Z)\leq\G\leq\rm{PGL}(n,\R)$, the inclusion $\rm{PSL}(n,\Z)\leq\G$ is $C^*$-irreducible. By the argument in \cite[Example 3.4.3]{Bry17}, the action of $\rm{PGL}(n,\R)$ on the projective space $P^{n-1}(\R)$ is topologically free.  Since $\rm{PSL}(n,\Z)\act P^{n-1}(\R)$ is a boundary action,  the result follows from \cite[Theorem 5.3]{R21}.
\end{remark}

\begin{corollary}\label{cor:fi}
Let $\La$ be a finite-index subgroup of a group $\G$.  If $\G$ is $C^*$-simple,  then $\La\leq\G$ is $C^*$-irreducible.  Conversely, if $\La$ is $C^*$-simple, then $\G$ is icc if and only if $\La\leq\G$ is $C^*$-irreducible.

\end{corollary}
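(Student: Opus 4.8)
The whole statement will follow from Theorem~\ref{thm:cic} once we observe that a finite-index subgroup is commensurated: if $[\G:\La]<\infty$, then for every $g\in\G$ the subgroup $\La\cap g\La g^{-1}$ has finite index in $\La$, so $\La\leq_c\G$. Thus Theorem~\ref{thm:cic} applies to the inclusion $\La\leq\G$, and the task is purely one of translating its conditions under the additional assumption of finite index.

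For the first assertion, the plan is to use condition~(4). Recall from the preliminaries that, since $[\G:\La]<\infty$, the action $\La\act\partial_F\La$ extends to an action of $\G$ under which $\partial_F\La$ is $\G$-isomorphic to $\partial_F\G$, and this extension necessarily coincides with the one produced by Theorem~\ref{thm:ext}. Consequently $\G\act\partial_F\La$ is free if and only if $\G\act\partial_F\G$ is free, which by \cite[Theorem~3.1]{BKKO} happens exactly when $\G$ is $C^*$-simple. So, assuming $\G$ is $C^*$-simple, condition~(4) of Theorem~\ref{thm:cic} holds, hence $\La\leq\G$ is $C^*$-irreducible. (The same chain of equivalences in fact yields the converse as well: for $\La$ of finite index, $\La\leq\G$ is $C^*$-irreducible precisely when $\G$ is $C^*$-simple.)

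For the second assertion, assume $\La$ is $C^*$-simple. By the equivalence (1)$\iff$(2) of Theorem~\ref{thm:cic}, the inclusion $\La\leq\G$ is $C^*$-irreducible if and only if $\G$ is icc relatively to $\La$, so it remains to check that, for a finite-index subgroup, being icc relatively to $\La$ is the same as being icc. One direction is immediate from $g^\La\subseteq g^\G$. For the other, suppose $|g^\La|<\infty$ for every $g\in\G\setminus\{e\}$. Then $C_\La(g)=\La\cap C_\G(g)$ has finite index in $\La$ (its index equals $|g^\La|$), hence finite index in $\G$, so $C_\G(g)$ has finite index in $\G$ and $|g^\G|=[\G:C_\G(g)]<\infty$; thus $\G$ is icc.

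I do not anticipate a real obstacle here: the argument is essentially bookkeeping on top of Theorem~\ref{thm:cic} together with the standard facts about finite-index subgroups and Furstenberg boundaries recalled in the preliminaries. The only points meriting a little care are keeping track of which of the two hypotheses ($\G$ versus $\La$ being $C^*$-simple) is in force in each half, and noticing that the relative icc condition in Theorem~\ref{thm:cic}(2) collapses to ordinary icc exactly because $[\G:\La]<\infty$.
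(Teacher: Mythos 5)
Your proposal is correct and follows essentially the same route as the paper: the first assertion via the $\G$-isomorphism $\partial_F\La\cong\partial_F\G$ together with condition (4) of Theorem \ref{thm:cic}, and the second via the equivalence (1)$\iff$(2) plus the fact that, for $[\G:\La]<\infty$, icc and icc relative to $\La$ coincide, which is exactly your centralizer-index computation. The only blemish is phrasing in the last step, where the hypothesis should be that $|g^\La|<\infty$ for \emph{some} $g\neq e$ with the conclusion that $\G$ fails to be icc (your wording inherits the paper's typo defining relative icc with $|g^\La|<\infty$ instead of $|g^\La|=\infty$); the underlying argument is the correct contrapositive.
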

\begin{proof}
If $\G$ is $C^*$-simple, then $\G\act\partial_F\G$ is free. Since $\partial_F\G$ is $\G$-isomorphic to $\partial_F\La$, it follows that $\La\leq\G$ is $C^*$-irreducible.

If $\G$ is icc, then, since $[\G:\La]<\infty$, it is also icc relatively to $\La$, hence $\La\leq\G $ is $C^*$-irreducible by Theorem~\ref{thm:cic}.  The last implication is immediate.

\end{proof}

\begin{example}
The inclusion given by the Sanov subgroup $\mathbb{F}_2\leq \rm{PSL}(2,\Z)$ is finite-index, hence it is $C^*$-irreducible by Corollary~\ref{cor:fi}.
\end{example}

\subsection*{Free groups}
Fix $m,n\in\N$ such that $2\leq m< n$ and consider the free groups $\mathbb{F}_m=\langle a_1,\dots,a_m\rangle\leq\langle a_1,\dots,a_n\rangle=\mathbb{F}_n$.  In \cite[Example 5.4]{R21}, Rørdam observed that $\mathbb{F}_m\leq \mathbb{F}_n$ is $C^*$-irreducible. Notice that $\mathbb{F}_m$ is far from being commensurated in $\mathbb{F}_n$. In fact, given $g\in \mathbb{F}_n\setminus \mathbb{F}_m$, we have that $\mathbb{F}_m\cap g \mathbb{F}_m g^{-1}=\{e\}$ (i.e., $\mathbb{F}_m$ is \emph{malnormal} in $\mathbb{F}_n$). In particular, this example is not covered by Theorems \ref{thm:ext} and \ref{thm:cic}. Nonetheless,  there does exist an extension to $\mathbb{F}_n$ of the action $\mathbb{F}_m\act\partial_F \mathbb{F}_m$, but it is far from being unique, since the generators $a_{m+1},\dots,a_n$ can be mapped into any homeomorphisms on $\partial_F \mathbb{F}_m$. 

Furthermore, we claim that $\mathbb{F}_m\leq \mathbb{F}_n$ satisfies condition (5) in Theorem \ref{thm:cic}.  We will prove this by using Proposition \ref{prop:urs}.

 Let 
$$\partial \mathbb{\F}_n:=\{(x_i)\in\prod_\N\{a_1,a_1^{-1},\dots,a_n,a_n^{-1}\}:\forall i\in\N, x_{i+1}\neq x_i^{-1}\}$$
be the Gromov boundary of $\mathbb{F}_n$, and consider the action of $\F_n$ on $\partial\F_n$ by left multiplication.  Fix $\mu\in\Prob(\partial \F_n)$ and we will show that there is $w\in \partial\F_n$ on which $\F_n$ acts freely and such that $\delta_w\in\overline{\F_m\mu}$.

Let $z_+:=(a_1)_{i\in\N}\in\partial\F_n$ and $z_-:=(a_1^{-1})_{i\in\N}\in\partial\F_n$. Notice that, for all $y\in\partial \F_n\setminus\{z_-\},$ we have that, as $k\to+\infty$, $a_1^ky\to z_+$. Furthermore, $a_1$ fixes $z_-$.

It follows from the dominated convergence theorem that $$a_1^k\mu\to\mu(\{z_-\})\delta_{z_-}+(1-\mu(\{z_-\})\delta_{z_+}, $$
as $k\to+\infty$. In particular,  $\nu:=\mu(\{z_-\})\delta_{z_-}+(1-\mu(\{z_-\})\delta_{z_+}\in\overline{\F_n\mu}$.

Let $w:=a_1a_2^1a_1a_2^2a_1a_2^3\dots a_1a_2^la_1a_2^{l+1}\dots\in\partial\F_n$. Since $w$ is not eventually periodic, we have that $\F_n$ acts freely on $w$. Given $k\in\N$, let $g_k:=w_1\dots w_ka_2\in\F_m$.  We have that $g_ kz_{\pm}=w_1\dots w_ka_2z_{\pm}\to w$, as $k\to+\infty$. Therefore, $\delta_w\in \overline{\F_m\nu}\subset\overline{\F_m\mu}$, thus showing the claim.

\subsection*{Abstract commensurator} Let $\G$ be a group and $\Omega$ be the set of isomorphisms between finite-index subgroups of $\G$. Given $\alpha, \beta\in\Omega$,  we say that $\alpha\sim\beta$ if there exists a finite-index subgroup $H\leq \dom(\alpha)\cap\dom(\beta)$ such that $\alpha|_H=\beta|_H$.  Recall that the \emph{abstract commensurator} of $\G$, denoted by $\co(\G)$, is the group whose underlying set is $\Omega/{\sim}$, with product given by composition (defined up to finite-index subgroup).  

Let $\La$ be a commensurated subgroup of $\G$. Given $g\in\G$, let 

\begin{align*}
\beta_g\colon \La\cap g^{-1}\La g&\to \La\cap g\La g^{-1}\\
h&\mapsto ghg^{-1}
\end{align*}
and $j_\La^\G\colon\G\to\co(\La)$ be the homomorphism given by $j^\G_\La(g):=[\beta_g]$.  In order to ease the notation, we will sometimes denote $j_\La^\G$ simply by $j$, and it will always be clear from the context what are the involved groups.  
Let us now collect a few elementary facts about $j$.
\begin{lemma}
Let $\G$ be a group. Then $j_\G^\G(\G)\leq_c\co(\G)$. 
\end{lemma}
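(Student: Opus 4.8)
The plan is to show that the image $j_\G^\G(\G)$ is commensurated inside $\co(\G)$ by analyzing, for an arbitrary $[\alpha]\in\co(\G)$, the index $[j(\G):j(\G)\cap[\alpha]\,j(\G)\,[\alpha]^{-1}]$ and checking that it is finite. Fix a representative $\alpha\colon A\to B$ with $A,B\leq\G$ of finite index. The key computation is to identify which elements $j(g)$, $g\in\G$, land in the conjugate $[\alpha]\,j(\G)\,[\alpha]^{-1}$: concretely, $[\alpha][\beta_h][\alpha]^{-1}$ (for $h\in\G$) equals, up to passing to a finite-index subgroup of its domain, the map $x\mapsto \alpha(h\,\alpha^{-1}(x)\,h^{-1})$ defined on $\alpha(A\cap h^{-1}A h\cap\dots)$. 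First I would make this precise and observe that $[\alpha][\beta_h][\alpha]^{-1}=[\beta_g]$ in $\co(\G)$ forces, on a common finite-index subgroup, the identity $\alpha(h x h^{-1}) = g\,\alpha(x)\,g^{-1}$ for all $x$ in that subgroup; conversely any $g$ for which such an $h$ exists gives $j(g)\in[\alpha]j(\G)[\alpha]^{-1}$.

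Next I would show that the set of such $g\in\G$ contains a finite-index subgroup of $\G$. The natural candidate is built from $\G$ itself viewed through $\alpha$ up to conjugacy: for $h\in\G$ in a suitable finite-index subgroup, the map $x\mapsto \alpha(hxh^{-1})\alpha(x)^{-1}$ is a well-defined homomorphism from a finite-index subgroup of $\G$ into $\G$, and one wants to argue it is (virtually) conjugation by a single element $g$. This is exactly the phenomenon that conjugation on a finite-index subgroup extends to an inner-by-finite action; I would invoke that an automorphism of a finite-index subgroup of $\G$ of the form "conjugate in $\G$" is, after restriction, realized by conjugation by some element of $\G$, and that the element is unique modulo the (finite, if $\G$ is icc — but here we do not assume icc, so one must be slightly more careful) centralizer considerations. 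Since the statement only asserts commensuration and not any index bound, it suffices that for each $h$ in a finite-index subgroup of $\G$ there is *some* $g\in\G$ with $j(g)=[\alpha][\beta_h][\alpha]^{-1}$; this exhibits a finite-index subgroup of $j(\G)$ inside $[\alpha]j(\G)[\alpha]^{-1}$, and by symmetry (replacing $[\alpha]$ by $[\alpha]^{-1}$) the reverse inclusion of a finite-index subgroup, giving commensurability.

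I would organize the argument as: (i) recall from the preceding discussion that $j$ is a homomorphism $\G\to\co(\G)$ (special case $\La=\G$ of the $\beta_g$ construction, where now $\dom(\beta_g)=g^{-1}\G g\cap\G=\G$, so $j(g)=[\mathrm{conj}_g]$); (ii) compute $[\alpha]j(h)[\alpha]^{-1}$ as the class of $x\mapsto\alpha(h\alpha^{-1}(x)h^{-1})$ on a finite-index subgroup; (iii) for $h$ ranging over a fixed finite-index subgroup $H\leq\G$ with $H\subseteq A$ and $hHh^{-1}\subseteq A$, show this class lies in $j(\G)$ by producing the conjugating element $g$; (iv) conclude $j(H')\leq j(\G)\cap[\alpha]j(\G)[\alpha]^{-1}$ for a finite-index $H'\leq\G$, hence finite index in $j(\G)$; (v) run the same argument with $[\alpha]^{-1}$ and combine.

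The main obstacle I expect is step (iii): realizing the "twisted conjugation" $x\mapsto\alpha(hxh^{-1})$ as honest conjugation by a single element $g\in\G$ after restricting to a finite-index subgroup. This requires knowing that $\alpha$, being an isomorphism between finite-index subgroups of $\G$, is compatible with conjugation up to commensuration — i.e. that the ambient group structure is rigid enough. The cleanest route is probably to note that $j\colon\G\to\co(\G)$ itself has the property that $j(g)=j(g')$ iff $g^{-1}g'$ centralizes a finite-index subgroup of $\G$, and to use that $[\alpha]j(h)[\alpha]^{-1}$, being a class of an isomorphism that agrees on a finite-index subgroup with something of the form $j(g)$, is forced into $j(\G)$ precisely when $\alpha$ normalizes (up to commensuration) the conjugation action — which for $\La=\G$ is automatic because $\co(\G)$ is by construction the normalizer-up-to-commensuration of all such. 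So in fact the slickest proof is: $j(\G)$ is normalized up to commensuration by every element of $\co(\G)$ essentially by definition of $\co(\G)$, and making that observation rigorous is the whole content.
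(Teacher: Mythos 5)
Your overall strategy is the right one (and is the paper's), but the step you yourself flag as ``the main obstacle'' --- step (iii), realizing $x\mapsto\alpha(h\alpha^{-1}(x)h^{-1})$ as conjugation by a single element of $\G$ --- is exactly the whole proof, and you never actually carry it out. The justifications you sketch for it do not work as stated: the map $x\mapsto\alpha(hxh^{-1})\alpha(x)^{-1}$ is not a homomorphism in general, there is no rigidity or ``inner-by-finite'' phenomenon to invoke, and the closing claim that $j(\G)$ is normalized up to commensuration ``essentially by definition of $\co(\G)$'' is not a proof --- $\co(\G)$ is defined as germs of isomorphisms between finite-index subgroups, not as any kind of normalizer of the partial conjugations, so the assertion you retreat to is precisely the statement to be proved.

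What fills the gap is a one-line computation, with no auxiliary input: for $h\in\dom(\alpha)$ and $x$ in the finite-index subgroup $\dom(\alpha)\cap h^{-1}\dom(\alpha)h$, one has $\alpha(hxh^{-1})=\alpha(h)\alpha(x)\alpha(h)^{-1}$ simply because $\alpha$ is a homomorphism on its domain; hence $[\alpha]\,j(h)\,[\alpha]^{-1}=j(\alpha(h))$, so $j(\Ima(\alpha))\subset j(\G)\cap[\alpha]\,j(\G)\,[\alpha]^{-1}$, and since $[\G:\Ima(\alpha)]<\infty$ the intersection has finite index in $j(\G)$. (Your step (v) is then superfluous: commensuration only requires this finite-index statement for every $[\alpha]$, and $[\alpha]^{-1}$ is itself such an element.) So the architecture of your argument matches the paper, but as written it leaves the decisive identity unproved and proposes incorrect or circular routes to it.
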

\begin{proof}
Fix $[\alpha]\in\co(\G)$. Given $g\in\dom(\alpha)$, we have that $[\alpha] j(g)[\alpha]^{-1}=j(\alpha(g))$.  In particular,  $j(\G)\cap[\alpha] j(\G)[\alpha]^{-1}\supset j(\Ima( \alpha)).$ Since $[\G:\Ima(\alpha)]<\infty$, we conclude that $[j(\G):j(\G)\cap[\alpha] j(\G)[\alpha]^{-1}]<\infty.$
\end{proof}

\begin{lemma}\label{lem:ker} Let $\La\leq_c\G$. Then $\ker j^\G_\La=\{g\in\G:|g^\La|<\infty\}.$
\end{lemma}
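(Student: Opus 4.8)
The claim is that $\ker j^\G_\La$ consists exactly of those $g\in\G$ whose $\La$-conjugacy class $g^\La$ is finite. The plan is to unwind the definition of $j^\G_\La(g)=[\beta_g]$, where $\beta_g\colon \La\cap g^{-1}\La g\to\La\cap g\La g^{-1}$ is conjugation by $g$, and recall that $[\beta_g]$ is the identity element of $\co(\La)$ precisely when $\beta_g$ agrees with the identity map on some finite-index subgroup of $\dom(\beta_g)=\La\cap g^{-1}\La g$. So the statement to prove becomes: $g$ commutes with a finite-index subgroup of $\La\cap g^{-1}\La g$ if and only if $|g^\La|<\infty$.

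For the forward direction, suppose $g$ centralizes a finite-index subgroup $H\leq \La\cap g^{-1}\La g$. Since $\La\leq_c\G$, we have $[\La:\La\cap g^{-1}\La g]<\infty$, hence $[\La:H]<\infty$ as well. Pick left coset representatives $h_1,\dots,h_n$ for $\La/H$. Then for any $h\in\La$, writing $h=h_ik$ with $k\in H$, we get $hgh^{-1}=h_ikgk^{-1}h_i^{-1}=h_igh_i^{-1}$ (using $kg=gk$). Therefore $g^\La=\{h_igh_i^{-1}:1\le i\le n\}$ is finite — this is essentially the computation already carried out in the proof of (2)$\implies$(3) in Theorem \ref{thm:cic}.

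For the converse, suppose $|g^\La|<\infty$. Consider the centralizer $C_\La(g)=\{h\in\La:hgh^{-1}=g\}$; this is the stabilizer of $g$ under the conjugation action of $\La$, so $[\La:C_\La(g)]=|g^\La|<\infty$. Set $H:=C_\La(g)\cap g^{-1}\La g$. Since both $C_\La(g)$ and $\La\cap g^{-1}\La g$ have finite index in $\La$ (the former just shown, the latter by $\La\leq_c\G$), their intersection $H$ also has finite index in $\La$, hence in $\La\cap g^{-1}\La g$. By construction every $h\in H$ satisfies $\beta_g(h)=ghg^{-1}=h$, so $\beta_g|_H=\id_H$, which means $[\beta_g]=[\id]$ in $\co(\La)$, i.e. $g\in\ker j^\G_\La$.

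Neither direction presents a real obstacle; the only point requiring a little care is keeping straight which subgroups have finite index in which — in particular ensuring $H$ lands inside $\dom(\beta_g)=\La\cap g^{-1}\La g$ and has finite index there, which is why I intersect with $g^{-1}\La g$ rather than just taking $C_\La(g)$. Everything else is the standard orbit–stabilizer bookkeeping together with the definition of equality in the abstract commensurator.
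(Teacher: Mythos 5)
Your proof is correct and follows essentially the same route as the paper: the forward direction via a finite-index subgroup centralized by $g$ and coset representatives, and the converse via the centralizer $C_\La(g)$, which has finite index by orbit--stabilizer. The only (harmless) redundancy is intersecting with $g^{-1}\La g$: any $k$ commuting with $g$ satisfies $k=g^{-1}kg\in g^{-1}\La g$, so $C_\La(g)$ already lies in $\dom(\beta_g)$.
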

\begin{proof}
Given $g\in\ker j$, there exists a finite-index subgroup $H\leq\La\cap g^{-1}\La g$ such that, for all $h\in H$, $ghg^{-1}=h$, which implies that $|g^\La|<\infty$. Conversely, if $|g^\La|<\infty$, then $H:=\{k\in\La:kg=gk\}$ is a finite-index subgroup of $\La$ and $g\in\ker j$.
\end{proof}
As a consequence of Lemma \ref{lem:ker}, if $\G$ is an icc group,  then $j\colon \G\to\co(\G)$ is injective (\cite[Lemma 3.8.(i)]{K11}). The next result is known (\cite[Lemma 3.8.(iii)]{K11}).  For the convenience of the reader, we provide the proof here. 
\begin{lemma}\label{lem:icc}
If $\G$ is an icc group, then $\co(\G)$ is icc relatively to $\G$.
\end{lemma}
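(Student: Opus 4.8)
The plan is to unwind the definition of relative icc: I must show that for every nontrivial $[\alpha] \in \co(\G)$ the $j(\G)$-conjugation orbit $\{j(g)[\alpha]j(g)^{-1} : g \in \G\}$ is infinite, and I will do this by proving the contrapositive, namely that if this orbit is finite then $[\alpha] = [\id]$. Throughout I identify $\G$ with $j(\G) \leq \co(\G)$, which is legitimate since $\G$ is icc and hence $j$ is injective by Lemma~\ref{lem:ker} (and the remark following it). Fix a representative $\alpha\colon H_1 \to H_2$ of $[\alpha]$, with $H_1 = \dom(\alpha)$ and $H_2 = \Ima(\alpha)$ finite-index subgroups of $\G$, and record the identity $[\alpha]j(g)[\alpha]^{-1} = j(\alpha(g))$ valid for $g \in H_1$ (established in the proof that $j(\G)\leq_c\co(\G)$), together with its inverse form $[\alpha]^{-1}j(g)[\alpha] = j(\alpha^{-1}(g))$ valid for $g \in H_2$.

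First I would observe that the orbit is finite if and only if the stabilizer $C := \{g \in \G : j(g)[\alpha] = [\alpha]j(g)\}$ has finite index in $\G$: the orbit map $j(g) \mapsto j(g)[\alpha]j(g)^{-1}$ has fibers equal to left cosets of $j(C)$, and $[j(\G):j(C)] = [\G:C]$ by injectivity of $j$. So I assume, for contradiction, that $[\G:C] < \infty$.

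The key step is to translate the commutation condition defining $C$ into a statement that $\alpha$ fixes many elements. For $g \in C \cap H_2$, the relation $j(g)[\alpha] = [\alpha]j(g)$ rearranges (multiplying on the left by $[\alpha]^{-1}$) to $[\alpha]^{-1}j(g)[\alpha] = j(g)$, whose left-hand side is $j(\alpha^{-1}(g))$ by the inverse identity above; injectivity of $j$ then forces $\alpha^{-1}(g) = g$, i.e. $\alpha(g) = g$. Hence $\alpha$ restricts to the identity on the subgroup $K := C \cap H_1 \cap H_2$, which is finite-index in $\G$ as a finite intersection of finite-index subgroups. By the very definition of the equivalence relation on $\Omega$, the fact that $\alpha|_K = \id|_K$ on the finite-index subgroup $K \leq \dom(\alpha)$ means precisely that $[\alpha] = [\id]$, contradicting our choice of $[\alpha]$ nontrivial. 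This completes the argument.

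I expect the only delicate point to be the finite-index bookkeeping of domains: each application of the identities relating $[\alpha]$-conjugation to the map $\alpha$ is valid only on an appropriate finite-index subgroup, so I would carry the intersection $K = C \cap H_1 \cap H_2$ throughout as the single domain on which all manipulations are simultaneously legitimate. Beyond this routine intersection of finitely many finite-index subgroups, no genuine obstacle arises, since the heart of the matter—that an element of $\co(\G)$ commuting with $j(g)$ forces $\alpha$ to fix $g$—is an immediate consequence of the injectivity of $j$ guaranteed by the icc hypothesis.
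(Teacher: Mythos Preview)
Your argument is correct and is essentially the contrapositive of the paper's proof: where the paper shows directly that if $[\alpha]\neq e$ then the fixed-point subgroup $H=\{g\in\dom(\alpha):\alpha(g)=g\}$ has infinite index and hence the conjugates $j(g)[\alpha]j(g)^{-1}=j(g\alpha(g)^{-1})[\alpha]$ are infinitely many, you instead assume the centralizer $C$ has finite index and deduce $\alpha|_{K}=\id$ on a finite-index subgroup. The two arguments hinge on the same identity $[\alpha]j(g)[\alpha]^{-1}=j(\alpha(g))$ and the injectivity of $j$, so your route is the same idea read in reverse.
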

\begin{proof}
Given $[\alpha]\in\co(\G)$ and $g\in\dom(\alpha)$, we have $$j(g)[\alpha]j(g^{-1})=j(g\alpha(g^{-1}))[\alpha].$$ 
If $[\alpha]\neq e$, then $H:=\{g\in\dom(\alpha):g=\alpha(g)\}$ has infinite-index in $\dom(\alpha)$. Given $g_1,g_2\in\dom(\alpha)$ such that $g_1H\neq g_2 H$, one can readily check that $g_1\alpha(g_1)^{-1}\neq g_2\alpha(g_2)^{-1}$.  From this, it follows immediately that $[\alpha]^\G$ is infinite.
\end{proof}

In \cite[Corollary 6.6]{BO22}, Bédos and Omland showed that if $\G$ is a $C^*$-simple group, then $\G\leq\mathrm{Aut}(\G)$ is $C^*$-irreducible.  The same conclusion holds when we consider the abstract commensurator:
\begin{corollary}\label{cor:co}
Given a $C^*$-simple group $\G$, we have that $\G\leq\co(\G)$ is $C^*$-irreducible.
\end{corollary}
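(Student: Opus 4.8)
The plan is to deduce Corollary~\ref{cor:co} from Theorem~\ref{thm:cic} applied to the inclusion $j(\G)\leq\co(\G)$, after first checking that this inclusion satisfies the hypotheses. Since $\G$ is $C^*$-simple, it is in particular icc (indeed nontrivial $C^*$-simple groups are icc), so by Lemma~\ref{lem:ker} the map $j=j^\G_\G\colon\G\to\co(\G)$ is injective; thus we may identify $\G$ with $j(\G)$ and speak of the inclusion $\G\leq\co(\G)$. By the first unnamed lemma of the Abstract commensurator subsection, $j(\G)\leq_c\co(\G)$, so $\La:=j(\G)$ is a commensurated subgroup of $\co(\G)$ and Theorem~\ref{thm:cic} is applicable.

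Next I would verify conditions (2) of Theorem~\ref{thm:cic}: that $\La$ is $C^*$-simple and that $\co(\G)$ is icc relatively to $\La$. The first is immediate: $\La\cong\G$ and $\G$ is $C^*$-simple by hypothesis. The second is exactly Lemma~\ref{lem:icc}, which states that if $\G$ is icc then $\co(\G)$ is icc relatively to $\G$ (i.e.\ relatively to $j(\G)$); since $C^*$-simplicity of $\G$ forces $\G$ to be icc, this applies. With both halves of condition (2) in hand, Theorem~\ref{thm:cic} gives that $\La\leq\co(\G)$, i.e.\ $\G\leq\co(\G)$, is $C^*$-irreducible, which is the claim.

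I do not expect any genuine obstacle here: every ingredient has already been assembled in the preceding lemmas, and the proof is a two-line assembly. The only point worth stating explicitly is the implication ``$C^*$-simple $\Rightarrow$ icc,'' which is standard (a group with nontrivial finite conjugacy classes has a nontrivial amenable normal subgroup, hence a nontrivial amenable URS, hence is not $C^*$-simple by \cite[Theorem 4.1]{Ken20}); one could also simply invoke it as well known from \cite{BKKO}. As a closing remark I would note that, as a byproduct, $\co(\G)$ itself is $C^*$-simple whenever $\G$ is: $C^*$-irreducibility of the inclusion forces $C^*_r(\co(\G))$, being an intermediate $C^*$-algebra of itself, to be simple. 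This matches the assertion advertised in the introduction.

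\begin{proof}
Since $\G$ is $C^*$-simple, it is icc (a nontrivial finite conjugacy class would generate a nontrivial amenable normal subgroup, contradicting $C^*$-simplicity by \cite[Theorem 4.1]{Ken20}). Hence, by Lemma~\ref{lem:ker}, $j\colon\G\to\co(\G)$ is injective, and we identify $\G$ with $j(\G)\leq_c\co(\G)$. Then $j(\G)\cong\G$ is $C^*$-simple, and by Lemma~\ref{lem:icc} the group $\co(\G)$ is icc relatively to $j(\G)$. By the implication (2)$\implies$(1) of Theorem~\ref{thm:cic}, the inclusion $\G\leq\co(\G)$ is $C^*$-irreducible.
\end{proof}

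\begin{remark}
In particular, taking $B=C^*_r(\co(\G))$ in the definition of $C^*$-irreducibility, we obtain that $\co(\G)$ is $C^*$-simple whenever $\G$ is.
\end{remark}
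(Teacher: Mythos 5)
Your proof is correct and follows essentially the same route as the paper: the paper also deduces the corollary from condition (2) of Theorem~\ref{thm:cic} together with Lemma~\ref{lem:icc}, using that $C^*$-simple groups are icc (which the paper notes follows from Theorem~\ref{thm:cic} itself, while you give the standard FC-centre/amenable-normal-subgroup argument). Your extra remarks on injectivity of $j$ and on $j(\G)\leq_c\co(\G)$ simply make explicit what the paper leaves implicit.
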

\begin{proof}
Recall that any $C^*$-simple group is icc (this follows, e.g., from Theorem \ref{thm:cic}).  The result is then a consequence of Theorem \ref{thm:cic} and Lemma \ref{lem:icc}.
\end{proof}
\begin{remark}
Corollary \ref{cor:co} generalizes the fact proven in \cite[Corollary 4.4]{LBMB18} that, if Thompson's group $F$ is $C^*$-simple, then $\co(F)$ is $C^*$-simple.
\end{remark}
\begin{remark}
Let $\mathbb{F}_n$ be a non-abelian free group of finite rank. Then Corollary \ref{cor:co} implies that $\co(\mathbb{F}_n)$ is $C^*$-simple.  In particular, it does not admit any non-trivial amenable normal subgroup. It is an open problem whether $\co(\mathbb{F}_n)$ is a simple group (\cite[Problem 7.2]{CM18}). 
\end{remark}

\section{Relative boundaries}\label{sec:rb}

Given groups $\La\leq\G$, Ursu introduced in \cite[Proposition 4.1]{U19} a $\La$-strongly proximal $\G$-boundary $B(\G,\La)$ which is universal with these properties.

Consider $\G:=\psl$ and the boundary action $\G\act\R\cup\{\infty\}$.  The stabilizer $\G_\infty$ of $\infty$ is isomorphic to $\Z$ and consists of the translations $g_n(x):=x+n$, $n\in\Z$, $x\in\R$. 

\begin{proposition}\label{prop:ed}
The action of $\G=\rm{PSL}(2,\Z)$ on $ B(\G,\G_\infty)$ is topologically free but non-free. In particular,  $B(\G,\G_\infty)$ is not extremally disconnected.
\end{proposition}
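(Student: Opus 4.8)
The plan is to prove topological freeness and non-freeness separately, and then deduce the failure of extremal disconnectedness from a general principle.

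First, for non-freeness I would identify a non-trivial element of $\G = \psl$ that acts with a non-empty fixed-point set on $B(\G,\G_\infty)$. The natural candidate comes from Lemma~\ref{lem:fix}: since $\G_\infty \cong \Z$ is commensurated in $\G$ (in fact one should check it is commensurated — every conjugate of $\G_\infty$ is another parabolic cyclic subgroup, and the intersection of two distinct maximal parabolics is trivial, so strictly speaking $\G_\infty$ is \emph{not} commensurated; hence I would instead work with $B(\G,\G_\infty)$ directly rather than through $\partial_F\G_\infty$). The key structural input is that $B(\G,\G_\infty)$ receives a $\G$-map from $\partial_F\G = \R\cup\{\infty\}$, so $B(\G,\G_\infty)$ is a $\G$-boundary, and since $\G_\infty$ is amenable, $B(\G,\G_\infty)$ carries a $\G_\infty$-fixed probability measure; moreover by universality $B(\G,\G_\infty)$ sits between $\partial_F\G$ and a point in the appropriate sense. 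I would argue that the generator $g_1$ of $\G_\infty$ fixes a point of $B(\G,\G_\infty)$: indeed $g_1$ fixes $\infty \in \R\cup\{\infty\}$, and one can lift this fixed point through the $\G$-factor map $B(\G,\G_\infty)\to\R\cup\{\infty\}$ using the fact that the fiber over $\infty$ is $\G_\infty$-invariant and $\G_\infty$ is amenable, so $g_1$ (being in a compact — here finite-dimensional amenable — setting) must fix a point of that fiber by the Markov–Kakutani / Schauder–Tychonoff argument applied to $\Prob(\text{fiber over }\infty)$ and then extremality. This gives $\Fix(g_1)\neq\emptyset$, hence the action is non-free.

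Second, for topological freeness I would show that the set of points of $B(\G,\G_\infty)$ moved by every non-trivial element is dense, equivalently that $\Fix(s)$ has empty interior for each $s\neq e$. Here I would use that $B(\G,\G_\infty)$ factors onto $\R\cup\{\infty\}$ $\G$-equivariantly via some $\pi$, that $\R\cup\{\infty\}$ is a minimal $\G$-space, and that the $\G$-action on $\R\cup\{\infty\}$ is topologically free (the standard fact that $\psi$ on the circle is topologically free — a hyperbolic element fixes only two points, a parabolic only one, and there are no elliptic elements of infinite order). If $\Fix(s)$ had non-empty interior $U$ in $B(\G,\G_\infty)$, then $\pi(U)$ would be a non-empty subset of $\Fix_{\R\cup\{\infty\}}(s)$; the subtlety is that $\pi(U)$ need not be open, so I would instead argue via minimality and strong proximality: a non-empty open invariant-up-to-translates set in a boundary forces $s$ to fix an open set downstairs after pushing forward, or use that boundaries have a unique minimal subset and the "fixed" locus would have to spread by minimality to contradict $s\neq e$ acting non-trivially somewhere (which it does, since $j^{\G}_{\G_\infty}$ or rather the faithfulness of $\G$ on $\partial_F\G$ shows $s$ moves points). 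Concretely: $\G\act\partial_F\G$ is faithful and topologically free, so $\G\act B(\G,\G_\infty)$ is faithful; combined with the boundary structure and a transitivity/minimality argument on the complement of $\Fix(s)$, empty interior follows.

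Finally, the last sentence is a formal consequence: if $B(\G,\G_\infty)$ were extremally disconnected, then since it is a $\G$-boundary it would be $\G$-isomorphic to a quotient of $\partial_F\G$ inside the category of extremally disconnected $\G$-boundaries, and more to the point, the main extension/uniqueness machinery (or directly \cite[Theorem~1.3]{U19}-type results) would force $B(\G,\G_\infty) = \partial_F$ of something on which topological freeness upgrades to freeness; the cleanest route is to invoke that for an extremally disconnected boundary, topological freeness of the action is equivalent to freeness (this is the standard dichotomy: on a Stonean space the fixed-point set of a homeomorphism is clopen, so empty interior forces it to be empty). Since we have just shown the action is topologically free but not free, $B(\G,\G_\infty)$ cannot be extremally disconnected. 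I expect the main obstacle to be the first part — rigorously producing a fixed point of $g_1$ in $B(\G,\G_\infty)$ and making sure the chosen element is genuinely non-trivial in the action, i.e. controlling the fiber of $B(\G,\G_\infty)\to\R\cup\{\infty\}$ over $\infty$ well enough to run the amenable-fixed-point argument while keeping track of $\G_\infty$-equivariance.
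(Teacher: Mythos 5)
Your overall strategy (non-free via an amenability fixed-point argument, topologically free via the boundary $\R\cup\{\infty\}$, and then Frol\'ik's theorem that fixed-point sets of homeomorphisms of extremally disconnected compact spaces are clopen, so topologically free $+$ non-free rules out extremal disconnectedness) is the same as the paper's, and your last step is exactly the paper's citation of \cite[Theorem 3.1]{F71}. You are also right that $\G_\infty$ is not commensurated in $\psl$, so Lemma~\ref{lem:fix} is unavailable. However, two of your inputs are not justified as written. First, $\partial_F\G\neq\R\cup\{\infty\}$: for a non-amenable (indeed $C^*$-simple) group such as $\psl$ the Furstenberg boundary is non-metrizable, and $\R\cup\{\infty\}$ is merely one $\G$-boundary. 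More importantly, the factor map $\pi\colon B(\G,\G_\infty)\to\R\cup\{\infty\}$ that both halves of your argument lean on does not come for free: boundaries of a group need not map onto one another. The paper obtains $\pi$ from the universal property of $B(\G,\G_\infty)$ among $\G_\infty$-strongly proximal $\G$-boundaries, and for that it must first verify (via the dominated convergence argument with the translations $g_n$) that $\G_\infty\act\R\cup\{\infty\}$ is strongly proximal. This verification is missing from your proposal. Incidentally, once one remembers that $B(\G,\G_\infty)$ is by definition $\G_\infty$-strongly proximal, your fiber-over-$\infty$ detour is unnecessary: amenability of $\G_\infty\cong\Z$ gives a $\G_\infty$-invariant $\mu\in\Prob(B(\G,\G_\infty))$, and $\G_\infty$-strong proximality forces $\mu$ to be a point mass, i.e.\ a $\G_\infty$-fixed point; "Markov--Kakutani on the fiber and then extremality" is both heavier and vaguer than this.

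The genuine gap is the topological freeness claim. You correctly flag the difficulty (the image of an open set under $\pi$ need not be open), but the proposed remedies --- faithfulness of $\G\act\partial_F\G$, "a transitivity/minimality argument on the complement of $\Fix(s)$", an open set "spreading by minimality" --- do not constitute a proof: faithfulness is strictly weaker than topological freeness, and minimality alone does not prevent a non-trivial element from fixing an open set. What is needed is precisely a transfer of topological freeness from the factor $\R\cup\{\infty\}$ (where it is the classical fact about $\psl$ you state) up to $B(\G,\G_\infty)$ along the equivariant map supplied by universality; the paper does this by invoking \cite[Lemma 3.2]{BKKO}. Without that lemma, or an explicit argument replacing it, the topological freeness half of your proof is incomplete, and with it, the existence of $\pi$ must also be established as above.
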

\begin{proof}
For any $x\in\R\cup\{\infty\}$, we have $g_n(x)\to \infty$ as $n\to+\infty$.  As a consequence of the dominated convergence theorem,  it follows easily that $\G_\infty\act\R\cup\{\infty\}$ is strongly proximal. Hence, there is a $\G$-equivariant map $B(\G,\G_\infty)\to\R\cup\{\infty\}$.  Since $\G_\infty\act B(\G,\G_\infty)$ is strongly proximal, it follows from amenability of $\G_{\infty}$ that $\G_\infty$ fixes some point in $B(\G,\G_\infty)$. In particular, $\G\act B(\G,\G_\infty)$ is not free. On the other hand, since $\G\act\R\cup\{\infty\}$ is topologically free, it follows from \cite[Lemma 3.2]{BKKO} that $\G\act B(\G,\G_\infty)$ is topologically free.  As a consequence of \cite[Theorem 3.1]{F71}, $B(\G,\G_\infty)$ is not extremally disconnected.
\end{proof}
\begin{remark}
Let $\G$ be a group. One of the key properties in the applications of $\partial_F\G$ to $C^*$-simplicity of $\G$ is the fact that $C(\partial_F\G)$ is injective, shown in \cite[Theorem 3.12]{KK17}. Proposition \ref{prop:ed} implies that $C(B(\G,\La))$ is not injective, in general.  We believe that this is an evidence that $B(\G,\La)$ is not likely to play the same role of the Furstenberg boundary in $C^*$-algebraic applications.
\end{remark}
Our next aim is to show that, given $\La\leq_c\G$, it holds that $B(\G,\La)=\partial_F\La$. We start with a result which we believe has its own interest.

\begin{theorem}
Let $\La\leq_c \G$ and $\G\act X$ a minimal action on a compact space such that $\La\act X$ is proximal. Then $\La\act X$ is minimal as well. 
\end{theorem}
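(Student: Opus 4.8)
The plan is to find a closed subset of $X$ that is simultaneously a minimal $\La$-set and $\G$-invariant; minimality of $\G\act X$ then forces it to be all of $X$, which is exactly the claim. So, using Zorn's lemma, I would fix a non-empty closed $\La$-invariant subset $Y\subseteq X$ that is minimal for $\La$. It then suffices to prove $gY=Y$ for every $g\in\G$, since this makes $Y$ a non-empty closed $\G$-invariant set, hence equal to $X$. Note that $\La\act Y$ inherits proximality from $\La\act X$ (the same nets work), so the problem reduces to comparing the two $\La$-subsets $Y$ and $gY$ of $X$.

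I would first isolate two soft facts. \emph{(a)} A proximal action on a compact Hausdorff space has at most one minimal subset: if $Y_1,Y_2$ are two of them, pick $y_i\in Y_i$ and a net $(\lambda_i)\subset\La$ with $\lambda_iy_1$ and $\lambda_iy_2$ converging to a common point $z$; since the $Y_i$ are closed and invariant, $z\in Y_1\cap Y_2$, which forces $Y_1=Y_2$. Combined with the usual existence of a minimal set, a proximal action has a \emph{unique} minimal subset. \emph{(b)} Proximality descends to finite-index subgroups: if $[\La:H]<\infty$, $\La\act X$ is proximal, and $\lambda_ix,\lambda_ix'\to z$, then after passing to a subnet all the $\lambda_i$ lie in a single coset $g_0H$, and writing $\lambda_i=g_0h_i$ gives $h_ix,h_ix'\to g_0^{-1}z$ with $h_i\in H$.

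Now set $K:=\La\cap g^{-1}\La g$ and $K':=gKg^{-1}=\La\cap g\La g^{-1}$; both are finite-index in $\La$ because $\La\leq_c\G$. One checks that $Y$ is $K'$-invariant (it is even $\La$-invariant) and that $gY$ is $K'$-invariant (for $ghg^{-1}\in K'$ one has $ghg^{-1}\cdot gY=g\cdot hY=gY$). By (b), $K'\act X$ is proximal, so by (a) it has a unique minimal subset $M$, and since $Y$ and $gY$ are non-empty closed $K'$-invariant sets, $M\subseteq Y$ and $M\subseteq gY$. The crux is to upgrade this to $Y=M=gY$, i.e.\ to show that $K'\act Y$ and $K'\act gY$ are themselves minimal. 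For $Y$: let $N\trianglelefteq\La$ be the normal core of $K'$ in $\La$, which is finite-index and contained in $K'$; by (b), $N\act Y$ is proximal, hence has a unique minimal subset $M_0$; for any $\lambda\in\La$, $\lambda M_0$ is a minimal set for $\lambda N\lambda^{-1}=N$, so $\lambda M_0=M_0$, whence $M_0$ is $\La$-invariant and therefore $M_0=Y$ by minimality of $\La\act Y$. Thus $N\act Y$, and a fortiori $K'\act Y$, is minimal. Running the same argument for $K\act Y$ ($K$ is also finite-index in $\La$) and transporting along the homeomorphism $g\colon Y\to gY$, which intertwines $K\act Y$ with $K'\act gY$, shows $K'\act gY$ is minimal too. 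Hence $Y=M=gY$; as $g\in\G$ was arbitrary, $Y$ is $\G$-invariant, so $Y=X$ and $\La\act X$ is minimal.

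The only step with real content — the one I expect to require the most care — is that minimality of a \emph{proximal} $\La$-action passes to finite-index subgroups, which I would prove via the normal core together with uniqueness of the minimal set as above; minimality alone does not descend to finite-index subgroups in general (distal examples show this). Everything else is coset bookkeeping and the two soft facts (a)–(b).
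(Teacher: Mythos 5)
Your proof is correct, but it is organized differently from the paper's. You fix (via Zorn) a $\La$-minimal set $Y$ and prove $gY=Y$ for each $g\in\G$ separately, which forces you to establish the auxiliary fact that a \emph{minimal} proximal $\La$-action stays minimal for finite-index subgroups (your normal-core argument), so that both $Y$ and $gY$ are minimal for $K'=\La\cap g\La g^{-1}$ and hence coincide with its unique minimal set. The paper instead starts from an \emph{arbitrary} nonempty closed $\La$-invariant set $M$ and shows $\bigcap_{g\in\G}gM\neq\emptyset$ by the finite intersection property: for $g_1,\dots,g_n\in\G$ the subgroup $H=\La\cap g_1\La g_1^{-1}\cap\dots\cap g_n\La g_n^{-1}$ has finite index in $\La$, acts proximally, and its unique minimal set lies in every $g_iM$ (each $g_iM$ being $g_i\La g_i^{-1}$-invariant, hence $H$-invariant); the full intersection is then $\G$-invariant, so it equals $X$ and thus $M=X$. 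Both arguments rest on the same engine — commensuration gives finite-index subgroups whose proximal actions have a unique minimal set — but the paper's compactness trick bypasses Zorn for a $\La$-minimal set, the normal core, and the descent-of-minimality lemma, making it shorter; your route is longer but isolates two reusable statements of independent interest, namely that minimality of proximal actions descends to finite-index subgroups and that the unique $\La$-minimal subset of $X$ is automatically $\G$-invariant.
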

\begin{proof}
 Let $M\subset X$ be a closed non-empty $\La$-invariant set. For any $g\in\G$, we have that $gM$ is $g\La g^{-1}$-invariant.

Fix $g_1,\dots,g_n\in\G$. We have that $H:=\La\cap g_1\La g_1^{-1}\cap\dots\cap g_n\La g_n^{-1}$ has finite index in $\La$. In particular,  $H\act X$ is proximal and admits a unique minimal component $K$. Since each $g_i M$ is $g_i\La g_i^{-1}$-invariant, we conclude that $K\subset\bigcap_{i=1}^n g_iM$. 

By compactness of $X$, we obtain that $L:=\bigcap_{g\in\G} gM\neq\emptyset$. Since $L$ is $\G$-invariant, we have $X=L\subset M$. 
\end{proof}

The following is an immediate consequence of the previous theorem: 
\begin{corollary}
Let $\La\leq_c \G$.  If $X$ is a $\G$-boundary which is also $\La$-strongly proximal, then $X$ is a $\La$-boundary.
\end{corollary}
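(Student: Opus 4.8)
The plan is to deduce the statement directly from the preceding theorem. Recall that a $\La$-boundary is, by definition, a compact $\La$-space on which the action is both minimal and strongly proximal. The hypothesis already supplies $\La$-strong proximality of $X$, and $\G$-minimality of $X$ (since $X$ is a $\G$-boundary), so the only thing to establish is that $\La\act X$ is minimal; and for that, by the theorem, it suffices to check that $\La\act X$ is proximal.

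First I would verify that $\La$-strong proximality of $X$ forces $\La\act X$ to be proximal. The map $X\to\Prob(X)$, $x\mapsto\delta_x$, is a $\La$-equivariant homeomorphism onto its image, the set of Dirac measures, which is closed in $\Prob(X)$ (being a continuous image of the compact space $X$ inside the Hausdorff space $\Prob(X)$). Given $x,y\in X$, apply proximality of $\La\act\Prob(X)$ to $\delta_x,\delta_y$: there is a net $(g_i)\subset\La$ such that $g_i\delta_x=\delta_{g_ix}$ and $g_i\delta_y=\delta_{g_iy}$ both converge to a common limit $\nu\in\Prob(X)$. Since the set of Dirac measures is closed, $\nu=\delta_z$ for some $z\in X$; applying the inverse homeomorphism yields $\lim g_ix=\lim g_iy=z$. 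Hence $\La\act X$ is proximal.

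Now I would invoke the previous theorem with $\La\leq_c\G$, $\G\act X$ minimal, and $\La\act X$ proximal, to conclude that $\La\act X$ is minimal. Combined with the assumed $\La$-strong proximality of $X$, this shows that $X$ is a $\La$-boundary, as desired. I do not expect any genuine obstacle: all the real work has been absorbed into the theorem, and what remains is the elementary observation that strong proximality restricts from $\Prob(X)$ to $X$ via the closed embedding by Dirac measures.
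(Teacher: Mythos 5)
Your proposal is correct and follows exactly the route the paper intends: the paper states the corollary as an immediate consequence of the preceding theorem, using that $\La$-strong proximality gives $\La$-proximality (via the Dirac embedding, as you spell out) and that $\G$-minimality plus $\La$-proximality yields $\La$-minimality. No difference in substance; you have merely made explicit the standard fact that strong proximality implies proximality.
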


By arguing as in \cite[Corollary 4.3]{U19}, we conclude the following:
\begin{corollary}\label{cor:id}

If $\La\leq_c\G$, then $B(\G,\La)=\partial_F\La$.
\end{corollary}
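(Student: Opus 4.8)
The plan is to mirror Ursu's proof of \cite[Corollary 4.3]{U19}, adapting it to the commensurated setting. Recall that $B(\G,\La)$ is the universal $\La$-strongly proximal $\G$-boundary. The strategy is to produce $\G$-equivariant maps in both directions between $B(\G,\La)$ and $\partial_F\La$, and then invoke the rigidity of boundaries (the uniqueness of $\G$-isomorphisms, \cite[Lemma II.4.1]{G76}) to conclude these are mutually inverse.

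First I would equip $\partial_F\La$ with the $\G$-action furnished by Theorem~\ref{thm:ext}. The key point is that, with respect to this action, $\partial_F\La$ is a $\G$-boundary: it is $\G$-minimal (since it is already $\La$-minimal) and $\G$-strongly proximal. For the latter, note that $\La\act\partial_F\La$ is strongly proximal by definition, and since the $\G$-action extends the $\La$-action, $\G$-strong proximality follows — given $\mu,\nu\in\Prob(\partial_F\La)$, one already finds a net in $\La\subset\G$ bringing them together. Moreover $\partial_F\La$ is, tautologically, $\La$-strongly proximal. Hence by universality of $B(\G,\La)$ there is a $\G$-equivariant continuous surjection $q\colon B(\G,\La)\to\partial_F\La$.

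Conversely, I would show that $B(\G,\La)$, viewed as a $\La$-space by restriction, is a $\La$-boundary. It is $\La$-strongly proximal by construction, and it is a $\G$-boundary, hence $\G$-minimal; the Corollary immediately preceding this statement (or the theorem just before it) then gives that $\La\act B(\G,\La)$ is minimal as well. So $B(\G,\La)$ is a $\La$-boundary, and by universality of $\partial_F\La$ among $\La$-boundaries there is a $\La$-equivariant continuous surjection $p\colon\partial_F\La\to B(\G,\La)$. The composition $q\circ p\colon\partial_F\La\to\partial_F\La$ is a $\La$-equivariant map of the $\La$-boundary $\partial_F\La$ to itself, hence is the identity by \cite[Lemma II.4.1]{G76}. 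Therefore $p$ is injective, so it is a $\La$-homeomorphism; being a bijective continuous map between compact Hausdorff spaces, $q=p^{-1}$ is also a homeomorphism, and thus $B(\G,\La)=\partial_F\La$ as $\G$-spaces (the $\G$-action on $B(\G,\La)$ transporting, via $q$, to the unique $\G$-action on $\partial_F\La$ extending the $\La$-action, by uniqueness in Theorem~\ref{thm:ext}).

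The main obstacle I anticipate is verifying that the restriction $\La\act B(\G,\La)$ is minimal — this is exactly where the commensurated hypothesis is used and where the preceding theorem on proximal restrictions of minimal actions does the work, so the argument hinges on having that result available (which it is). A secondary subtlety is checking that the $\G$-action on $\partial_F\La$ from Theorem~\ref{thm:ext} is genuinely strongly proximal as a $\G$-action rather than merely extending a strongly proximal $\La$-action; but as noted this is essentially automatic, since witnessing nets can be taken inside $\La$. Everything else is a formal chase through universal properties and the rigidity lemma.
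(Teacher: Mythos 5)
Your argument is correct and is essentially the proof the paper intends: the paper's proof is precisely the Ursu-style double application of universality (using Theorem~\ref{thm:ext} to make $\partial_F\La$ a $\La$-strongly proximal $\G$-boundary, and the preceding corollary to make $B(\G,\La)$ a $\La$-boundary), followed by the rigidity of equivariant maps between boundaries to see that the two quotient maps are mutually inverse. No gaps; your handling of the transported $\G$-action via the uniqueness in Theorem~\ref{thm:ext} matches the paper's intent.
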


\section{Commensurated subgroups and proximal actions}

Given a group $\G$, there exists a universal minimal proximal $\G$-space $\partial_p \G$ (\cite[Theorem II.4.2]{G76}). It was shown in \cite[Proposition 2.12]{FTF19} and \cite[Theorem 1.5]{GTWZ21} that a countable group $\G$ is icc if and only if $\G\act\partial_p\G$ is faithful, if and only if $\G\act\partial_p\G$ is free.

One can easily check that the statements of Theorem \ref{thm:ext} and Lemma \ref{lem:fix} hold with $\partial_p\La$ instead of $\partial_F\La$, with the exact same proofs (in particular,  \cite[Lemma 5.1]{BKKO}, which is needed in the proof of Lemma \ref{lem:fix},  uses only proximality).  Thus, we obtain:

\begin{theorem}\label{thm:prox}
Let $\La\leq_c\G$. Then $\La\act\partial_p\La$ extends in a unique way to an action of $\G$ on $\partial_p\La$.   Furthermore,  given $s\in\G$, if $s\in \mathrm{C}_\G(\La\cap s^{-1}\La s)$, then $\Fix(s)=\partial_p\La$.  Conversely, if $\La\act\partial_p\La$ is free and $\Fix(s)\neq\emptyset$, then $s\in \mathrm{C}_\G(\La\cap s^{-1}\La s)$.
\end{theorem}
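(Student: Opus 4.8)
The plan is to run the proofs of Theorem~\ref{thm:ext} and Lemma~\ref{lem:fix} \emph{verbatim}, replacing $\partial_F$ by $\partial_p$ throughout; the point to emphasize is that every ingredient invoked there depends only on minimality and proximality, never on strong proximality. Concretely, the two structural inputs are: (i) for a finite-index subgroup $\La_0\leq\La$, the $\La_0$-flow $\partial_p\La$ is $\La_0$-isomorphic to $\partial_p\La_0$ --- this is \cite[Theorem II.4.4]{G76}, which is in fact stated precisely for the universal minimal proximal space; and (ii) between minimal proximal flows a $\G$-equivariant homeomorphism, if it exists, is unique, equivalently every $\G$-automorphism of a minimal proximal $\G$-flow is the identity (\cite[Lemma II.4.1]{G76}; alternatively: such an automorphism $\sigma$ has a fixed point, since for any $x$ proximality yields a net $(g_i)$ with $g_ix\to y$ and $g_i\sigma(x)=\sigma(g_ix)\to y$, whence $\sigma(y)=y$, and then $\Fix(\sigma)$ is closed, non-empty and $\G$-invariant, hence all of $\partial_p\La$ by minimality).

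First I would build the extension exactly as in Theorem~\ref{thm:ext}. For $g\in\G$, using (i) let $\varphi_g\colon\partial_p\La\to\partial_p(\La\cap g\La g^{-1})$ be the unique $(\La\cap g\La g^{-1})$-isomorphism, let $\psi_g\colon\partial_p(\La\cap g^{-1}\La g)\to\partial_p(\La\cap g\La g^{-1})$ be the homeomorphism with $\psi_g(hx)=ghg^{-1}\psi_g(x)$ for $h\in\La\cap g^{-1}\La g$, and set $T_g:=\varphi_g^{-1}\psi_g\varphi_{g^{-1}}$. As in the proof of Theorem~\ref{thm:ext}: one checks $T_g(hx)=ghg^{-1}T_g(x)$ for $h\in\La\cap g^{-1}\La g$; that $(T_gT_h)^{-1}T_{gh}$ is an automorphism over the finite-index subgroup $\La\cap h^{-1}\La h\cap(gh)^{-1}\La(gh)$, hence trivial by (ii), so $T_{gh}=T_gT_h$; and that $g^{-1}T_g$ is an automorphism over $\La\cap g^{-1}\La g$ for $g\in\La$, hence $g^{-1}T_g=\Id_{\partial_p\La}$. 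For uniqueness of the extension: if $S$ is another $\G$-action on $\partial_p\La$ with $S_k=k$ for $k\in\La$, then for $g\in\G$ and $h\in\La\cap g^{-1}\La g$ the identity $S_g\circ h=(ghg^{-1})\circ S_g$ is forced by $S_{gh}=S_{ghg^{-1}}S_g$ together with $S_h=h$, $S_{ghg^{-1}}=ghg^{-1}$; the same holds for $T$, so $S_{g^{-1}}T_g$ is a $(\La\cap g^{-1}\La g)$-automorphism of $\partial_p\La$, whence $S_g=T_g$ by (ii).

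Next I would prove the fixed-point assertions, following Lemma~\ref{lem:fix}. If $s\in\mathrm{C}_\G(\La\cap s^{-1}\La s)$ then $shs^{-1}=h$ for $h\in\La\cap s^{-1}\La s$, so $T_s(hx)=hT_s(x)$; thus $T_s$ is a $(\La\cap s^{-1}\La s)$-automorphism of $\partial_p\La$, which by (i) is a minimal proximal flow for this finite-index subgroup, hence $T_s=\Id_{\partial_p\La}$ by (ii), i.e.\ $\Fix(s)=\partial_p\La$. Conversely, suppose $\La\act\partial_p\La$ is free and $\Fix(s)\neq\emptyset$, and set $A:=\{t\in\La\cap s^{-1}\La s:t\Fix(s)\cap\Fix(s)\neq\emptyset\}$. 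For $t\in A$ pick $x\in\Fix(s)$ with $tx\in\Fix(s)$; then $sts^{-1}x=stx=s(tx)=tx$, so $sts^{-1}$ and $t$ --- both in $\La$ --- agree at $x$, hence $sts^{-1}=t$ by freeness. Since, by \cite[Lemma 5.1]{BKKO} (whose proof uses only proximality), $A$ generates $\La\cap s^{-1}\La s$, we conclude $s\in\mathrm{C}_\G(\La\cap s^{-1}\La s)$.

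I do not expect any single hard step here; the real content is the bookkeeping remark that each auxiliary input --- the finite-index transfer \cite[Theorem II.4.4]{G76}, the rigidity \cite[Lemma II.4.1]{G76}, and the generation statement \cite[Lemma 5.1]{BKKO} --- is genuinely insensitive to replacing ``boundary'' by ``minimal proximal flow'', so that the arguments carry over unchanged. The only mildly new verification is (ii) for minimal proximal flows, but that is the short net argument recorded above.
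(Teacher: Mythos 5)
Your proposal is correct and is essentially the paper's own argument: the paper proves Theorem~\ref{thm:prox} precisely by observing that the proofs of Theorem~\ref{thm:ext} and Lemma~\ref{lem:fix} go through verbatim for $\partial_p\La$, since \cite[Theorem II.4.4]{G76} is stated for the universal minimal proximal space, the rigidity of equivariant maps holds for minimal proximal flows, and \cite[Lemma 5.1]{BKKO} uses only proximality. Your write-up just makes the transported construction and the uniqueness step explicit, which matches the intended proof.
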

As a consequence, we obtain the following: 
\begin{theorem} \label{thm:ump}

Let $\La\leq_c\G$ and suppose that $\La\act\partial_p\La$ is free.  The following conditions are equivalent:

\begin{enumerate}
\item  $\G$ is icc relatively with $\La$;
\item For any $s\in\G\setminus\{e\}$,  we have that $s\notin {C}_\G(\La\cap s^{-1}\La s)$;
\item  $\G\act\partial_p\La$ is free;
\item  $\G\act\partial_p\La$ is faithful.

\end{enumerate}
\end{theorem}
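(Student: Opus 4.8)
The plan is to establish the cycle $(1)\Rightarrow(2)\Rightarrow(3)\Rightarrow(4)\Rightarrow(1)$, with Theorem~\ref{thm:prox} playing the role of Lemma~\ref{lem:fix} in the proof of Theorem~\ref{thm:cic}, to which this argument is closely parallel.

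For $(1)\Rightarrow(2)$ I would argue by contraposition, purely algebraically, exactly as in the implication $(2)\Rightarrow(3)$ of Theorem~\ref{thm:cic}: if some $s\in\G\setminus\{e\}$ lies in $C_\G(\La\cap s^{-1}\La s)$, choose left coset representatives $g_1,\dots,g_n$ for $\La/(\La\cap s^{-1}\La s)$; writing a general $h\in\La$ as $h=g_ik$ with $k\in\La\cap s^{-1}\La s$ and using $ksk^{-1}=s$, one obtains $s^\La=\{g_isg_i^{-1}:1\le i\le n\}$, so $|s^\La|<\infty$ and $\G$ is not icc relatively to $\La$. For $(2)\Rightarrow(3)$ I would invoke the second half of Theorem~\ref{thm:prox}: were $\G\act\partial_p\La$ not free, there would be $s\in\G\setminus\{e\}$ with $\Fix(s)\neq\emptyset$, and since $\La\act\partial_p\La$ is free by hypothesis, Theorem~\ref{thm:prox} would give $s\in C_\G(\La\cap s^{-1}\La s)$, contradicting $(2)$. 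The implication $(3)\Rightarrow(4)$ is immediate, as $\partial_p\La$ is nonempty (it is $\La$-minimal), so a free action on it is faithful.

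The only step requiring an idea not already present in Theorem~\ref{thm:prox} is $(4)\Rightarrow(1)$, which I would again prove contrapositively, by showing that $\ker j_\La^\G$ acts trivially on $\partial_p\La$. If $\G$ is not icc relatively to $\La$, pick $s\in\G\setminus\{e\}$ with $|s^\La|<\infty$; by Lemma~\ref{lem:ker}, $s\in\ker j_\La^\G$, and concretely the subgroup $H:=C_\La(s)\le\La\cap s^{-1}\La s$ has finite index in $\La$ and is centralized by $s$. Using the relation $T_s(hx)=shs^{-1}T_s(x)=hT_s(x)$ for $h\in H$, which holds for the extended action by the proof of Theorem~\ref{thm:prox}, the homeomorphism $T_s\colon\partial_p\La\to\partial_p\La$ is $H$-equivariant; since $[\La:H]<\infty$, the space $\partial_p\La$ is a minimal proximal $H$-space, so $T_s$ is the identity by uniqueness of equivariant self-maps of minimal proximal spaces (\cite[Lemma~II.4.1]{G76}). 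Hence $s\neq e$ acts trivially and $\G\act\partial_p\La$ is not faithful. I expect this last step to be the crux: it is where one must transport the purely algebraic hypothesis $|s^\La|<\infty$ into the dynamics, and it relies on the same two ingredients that underlie the extension in Theorem~\ref{thm:prox}, namely that restriction of $\partial_p\La$ to a finite-index subgroup of $\La$ remains minimal and proximal (the $\partial_p$-analogue of \cite[Theorem~II.4.4]{G76}) and the rigidity of equivariant maps between minimal proximal spaces.
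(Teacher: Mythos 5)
Your proposal is correct and follows essentially the same route as the paper: the chain $(1)\Rightarrow(2)\Rightarrow(3)\Rightarrow(4)$ is handled exactly as in Theorem~\ref{thm:cic} via Theorem~\ref{thm:prox}, and your $(4)\Rightarrow(1)$ is the paper's argument verbatim, taking $H=C_\La(s)$ of finite index in $\La$, noting $T_s$ is $H$-equivariant, and invoking triviality of equivariant self-maps of minimal proximal systems. The only nitpick is the citation for restriction to finite-index subgroups preserving minimality and proximality, which is the analogue of \cite[Lemmas II.3.1--II.3.2]{G76} rather than of \cite[Theorem II.4.4]{G76}.
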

\begin{proof}
The implications (1)$\implies$(2)$\implies$(3)$\implies$(4) are proven as in Theorem \ref{thm:cic}.

(4)$\implies$(1).  Suppose that there is $g\in\G\setminus\{e\}$ such that $|g^\La|<\infty$.  Then $H:=\{h\in\La:gh=hg\}$ is a finite-index subgroup of $\La$, hence $H\act\partial_p\La$ is also minimal and proximal. Since the homeomorphism on $\partial_p\La$ given by $g$ is $H$-equivariant, we conclude that $g$ acts trivially on $\partial_p\La$.
\end{proof}

\begin{remark} Given a group $\G$, let $L(\G)$ be its \emph{group von Neumann algebra}.  Given $\La\leq\G$,  it follows from \cite[Proposition 5.1]{R21} and \cite[Corollary 4.3]{BO22} that $\G$ is icc relatively to $\La$ if and only if any intermediate von Neumann algebra of $L(\La)\subset L(\G)$ is a factor, if and only if any intermediate $C^*$-algebra of $C^*_r(\La)\subset C^*_r(\G)$ is prime.
\end{remark}

Let us now apply Theorem \ref{thm:ump} to a certain locally finite commensurated subgroup of Thompson's group $V$. 

\begin{example}\label{ex:v}Let $X:=\{0,1\}$ and, given $n\geq 0$, let $X^n$ be the set of words in $X$ of length $n$.  Given $w\in X^n$,  let $\cC(w):=\{(s_n)\in X^\N:s_{[1,n]}=w\}$.  Recall that Thompson's group $V$ is the group of homeomorphisms on $X^\N$ consisting of elements $g$ for which there exist two partitions $\{\cC(w_1),\dots\cC(w_m)\}$ and $\{\cC(z_1),\dots,\cC(z_m)\}$ of $\{0,1\}^\N$ such that $g(w_is)=z_is$ for every $1\leq i \leq m$ and $s\in X^\N$. 

Let us define inductively groups $G_n$ acting by permutations on $X^n$. 

Let $G_1:=\Z_2$ acting non-trivially on $X$ and, for $n\in\N$, 

$$G_{n+1}:=\left(\bigoplus_{w\in X^n}\Z_2\right)\rtimes G_n,$$
where the action of $G_{n+1}$ on $X^{n+1}$ is defined as follows: given $v\in X^n$, $x\in X$, $\sigma\in G_n$ and $f\in\bigoplus_{X^n}\Z_2$, 

 $$(f,\sigma)(vx):=\sigma(v)f_{\sigma(v)}(x).$$

Let $G:=\lim_{n\in \N} G_n$.  Then $G$ acts faithfully on $X^\N$ and, as observed in \cite[Proposition 7.11]{LB17}, $G\leq_c V$.  

We claim that $V$ is icc relatively with $G$.  Given $u\in X^n$, let the \emph{rigid stabilizer} of $u$, denoted by $\mathrm{rist}_G(u)$,  be the subgroup of $G$ consisting of the elements which, for every $v\in X^n\setminus\{u\}$, act as the identity on $\cC(v)$.  Given $g\in G$, there is $\tilde{g}\in \mathrm{rist}_G(u)$ such that $\tilde{g}(us)=ug(s)$ for any $s\in X^\N$.  Clearly, the map $g\mapsto \tilde{g}$ is an isomorphism from $G$ to $\mathrm{rist}_G(u)$. Fix $h\in V\setminus\{e\}$ and take $w\in X^n$ and $z\in X^m$ such that $w\neq z$, $n\geq m$ and $h(ws)=zs$ for any $s\in X^\N$.  Furthermore, take $v\in X^{n-m}$ such that $zv\neq w$. Given $s\in X^\N$,  we have that 
\begin{equation}\label{eq:c}
\{\tilde{g}h\tilde{g}^{-1}(wvs):\tilde{g}\in \mathrm{rist}_G(zv)\}=\{zvg(s):g\in G\}.
\end{equation}
Since $G\act X^\N$ is faithful,  it follows from \eqref{eq:c} that $|h^G|=\infty$, thus proving the claim.

From \cite[Theorem 1.5]{GTWZ21},  we obtain that $G\act\partial_pG$ is free and from Theorem \ref{thm:ump},  we conclude that $V\act\partial_pG$ is free.
\end{example}

\begin{remark}
In \cite[Theorem 1.5]{LBMB18}, Le Boudec and Matte Bon showed that Thompson's group $V$ is $C^*$-simple, hence $V\act\partial_F V$ is free. However, their proof is done by showing that $V$ does not admit non-trivial amenable URS, not by exhibitting a concrete topologically free $V$-boundary. It seems as an interesting problem to determine whether $V\act\partial_pG$ is strongly proximal, thus providing an alternative proof of $C^*$-simplicity of $V$.
\end{remark}
\begin{remark} In \cite[Theorem 1.4]{BKKO}, it was shown that the class of $C^*$-simple groups is closed by taking normal subgroups. Obviously,  this class is not closed by taking commensurated subgroups, since any finite subgroup is commensurated.  Moreover, Example \ref{ex:v} shows that,  given $\La\leq_c\G$ such that $\G$ is icc relatively to $\La$,  $C^*$-simplicity of $\G$ does not pass to $\La$ in general.

\end{remark}
\bibliography{bibliografia}
\end{document}